\DeclareMathAlphabet\mathbfcal{OMS}{cmsy}{b}{n}
\DeclareMathOperator{\LList}{LList}
\DeclareMathOperator{\bool}{bool}
\DeclareMathOperator{\Ell}{Ell}
\DeclareMathOperator{\Jac}{Jac}
\DeclareMathOperator{\im}{im}
\DeclareMathOperator{\st}{st}
\DeclareMathOperator{\Hom}{Hom}
\DeclareMathOperator{\diag}{diag}
\DeclareMathOperator{\Gal}{Gal}
\DeclareMathOperator{\Cl}{Cl}
\DeclareMathOperator{\Aut}{Aut}
  \newcommand{\eq}[1][r]
   {\ar@<-3pt>@{-}[#1]
    \ar@<-1pt>@{}[#1]|<{}="gauche"
    \ar@<+0pt>@{}[#1]|-{}="milieu"
    \ar@<+1pt>@{}[#1]|>{}="droite"
    \ar@/^2pt/@{-}"gauche";"milieu"
    \ar@/_2pt/@{-}"milieu";"droite"}
\newcommand{\ST}{{\mathrm {ST}}}
\newcommand{\Res}{{\mathrm {Res}}}
\newcommand{\lra}{\longrightarrow}
\newcommand{\ra}{{\rightarrow}}
\newcommand{\LListFM}{\LList_{\text{FM}-\QQ}}
\newcommand{\fv}{\mathfrak{v}}
\newcommand{\fa}{\mathfrak{a}}
\newcommand{\fb}{\mathfrak{b}}
\newcommand{\fs}{\mathfrak{s}}
\newcommand{\QQ}{\mathds{Q}}
\newcommand{\Qbar}{\overline{\QQ}}
\newcommand{\GalQK}{\Gal\left(\Qbar/K\right)}
\newcommand{\GalKQ}{\Gal\left(K/\QQ\right)}
\newcommand{\GalQQ}{\Gal\left(\Qbar/\QQ\right)}
\newcommand{\PP}{\mathds{P}}
\newcommand{\trsp}[1]{{}^{t}\!#1}
\newcommand{\ZZ}{\mathds{Z}}
\newcommand{\RR}{\mathds{R}}
\newcommand{\AR}{\mathcal{A}_R}
\newcommand{\ARQ}{\mathcal{A}_{R,\QQ}}
\newcommand{\RLat}{\mathbfcal{L}_R}
\newcommand{\RhLat}{\RLat^{h,int}}
\newcommand{\AVR}{\mathbfcal{A}_R}
\newcommand{\PAV}{\mathbfcal{A}_R^p}
\newcommand{\T}{\mathbf{T}}
\newcommand{\PT}{\T^p}
\newcommand{\PTR}{\mathbfcal{T}_R^p}
\newcommand{\R}{\mathbf{F}}
\newcommand{\Rh}{\R_h}
\newcommand{\NN}{\mathds{N}}
\newcommand{\CC}{\mathds{C}}
\newcommand{\End}{\text{End}}
\newcommand{\an}{_{\text{an}}}
\newcommand{\rat}{_{\text{rat}}}
\newcommand{\set}[1]{\left\{#1\right\}}
\newcommand{\bracket}[1]{\left[#1\right]}
\newcommand{\parent}[1]{\left(#1\right)}
\newcommand{\comm}[1]{}
\newtheorem{prop}{Proposition}
\newtheorem{theo}{Theorem}
\newtheorem{coro}{Corollary}
\newtheorem{rem}{Remark}
\newtheorem{lem}{Lemma}
\def\yes{1}
\def\no{0}
\def\makeitniceforPhDmanuscript{\no}
\title{Polarized products of elliptic curves with complex multiplication and field of moduli $\QQ$}
\date{March 2022}
\begin{document}

\author{Fabien Narbonne\thanks{Univ Rennes, CNRS, IRMAR - UMR 6625, F-35000
Rennes, %
  France.\\\textit{Email address: }fabien.narbonne@univ-rennes1.fr}\\\textit{with an appendix of }Francesc Fité\textit{ and} Xavier Guitart\thanks{Departament de matem\`atiques i inform\`atica, Universitat de Barcelona}}

\maketitle

\begin{abstract}
    Let $R$ be the maximal order in a quadratic imaginary field $K$. We give an equivalence of categories between the category of polarized abelian varieties isomorphic to a product of elliptic curves over $\CC$ with complex multiplication (CM) by $R$ and the category of integral hermitian $R$-lattices. Then we apply this equivalence to enumerate all the genus $2$ and $3$ curves with field of moduli $\QQ$ and with Jacobian isomorphic to a product of elliptic curves with CM by $R$.
\end{abstract}

\section*{Introduction}

Let $E$ be an elliptic curve over $\CC$ with complex multiplication by a maximal order $R$ in a quadratic imaginary field $K$. Then $E$ admits a model over $\overline{\QQ}$. It even admits one over $\QQ(j(E))$, which has degree $\#\Cl(R)$ over $\QQ$, and not over any sub-extension. It may then be surprising that powers of CM elliptic curves may be defined over smaller fields than expected, sometimes even over $\QQ$. In \cite[Theorem~1.1~and~1.2]{FG20} the authors show  for instance that there are abelian surfaces defined over $\QQ$, $\overline{\QQ}$-isogenous to the square of a CM elliptic curve, for exactly $45$ discriminants even though there are only $13$ CM elliptic curves over $\QQ$. From this they deduce, \cite[Corollary 1.3]{FG20} that there are exactly $92$ $\Qbar$-endomorphism algebras of geometrically split abelian surfaces over $\QQ$. This study is motivated by the conjecture on the possible finiteness of the set of endomorphism rings of abelian varieties of a given dimension over a fixed degree extension field of $\QQ$. 

A weaker requirement is to ask for the field of moduli of a (polarized) abelian variety to be $\QQ$.  In \cite{GHR} the authors give the finite list of indecomposable principally polarized abelian surfaces (also known as Jacobian of genus $2$ curves) with field of moduli $\QQ$ which are isomorphic to $E^2$.

In the present article we address the case of abelian varieties with field of moduli $\QQ$ isomorphic over $\CC$ to products $E_1 \times \cdots \times E_g$, $g > 1$, of elliptic curves with CM by a maximal order $R$ and we give an exhaustive list of these principally polarized abelian varieties for the case $g=2$ in Table \ref{g2} and for $g=3$ in Table \ref{g3}. Before explaining the structure of the paper and our strategy, we mention the following natural generalization to abelian varieties isomorphic to products $E_1 × ··· × E_g, g > 1$, of elliptic curves with CM by possibly distinct orders in the same imaginary quadratic field $K$. By \cite[Theorem 2]{kani}, this is equivalent to considering abelian varieties isogenous to $E^g$ over $\CC$ for a given elliptic curve E with CM by K. In my PhD thesis, I presented  heuristic results for $g = 2$ in this direction, see \cite[Table A.1, A.2 and A.3]{Nar22}. Note that  in \cite[Théorème 2.2.33]{Nar22}, the equivalence of categories of Theorem \ref{equivR} is already proved for arbitrary orders. To finish the proof, one would need to check the validity of Theorem \ref{KQ} and Theorem \ref{QK} in this generality. We also hope to address the finer question the existence of a model over $\QQ$ for principally polarized abelian variety with field of moduli $\QQ$. This theoretical result would also lead to the more refined project of having certified models over $\QQ$ when the descent is possible. Very recent results contained in \cite{LRR} would help to achieve this. Some certified models of genus $2$ curves over $\QQ$ with Jacobian isomorphic to a product of CM elliptic curves can be found in \cite{FFG}.

In Section~\ref{sect1} we recall some properties of the classical equivalence of categories between the complex abelian varieties and polarizable tori and how it behaves with respect to polarizations. Polarizations on abelian varieties give rise to positive definite hermitian forms with a compatibility condition on the lattice.

In Section~\ref{sect2}, we restrict the equivalence of categories to the abelian varieties isomorphic to the product of elliptic curves with complex multiplication by $R$. For these abelian varieties, we associate the complex tori $V/\Gamma$, where $V$ is a $\CC$-vector space and $\Gamma$ is a lattice in $V$. The lattice $\Gamma$ can be endowed with the structure of an $R$-module, providing additional structure than the $\ZZ$-module one it already has. The corresponding hermitian form endows the $R$-module $\Gamma$ with the structure of an \textit{integral hermitian lattice} (up to rescaling the hermitian form by a constant that only depends on $R$). This restriction leads to an equivalence of categories between polarized abelian varieties isomorphic to a product of elliptic curves with CM by $R$ and integral hermitian $R$-lattices (Theorem \ref{equivR}). Moreover, hermitian forms corresponding to principal polarization give \textit{unimodular lattices} through the equivalence. We may note that a similar functor is developed in \cite{JKP+18} in a much wider framework since it is defined for any field, not only $\CC$. Under some conditions, it is also an equivalence of categories. However, we chose to focus our attention on a much simpler functor since we will need to refine it in the next section.

The goal of Section~\ref{sect3} is to translate the action of $\Gal(\overline{\QQ}/\QQ)$ on abelian varieties $\overline{\QQ}$-isomorphic to $E_1 \times \cdots \times E_g$ into the category of integral hermitian lattices through the previous functor. The action of $\Gal(\overline{\QQ}/\QQ)$ can be decomposed into two steps: the action of $\Gal(\overline{\QQ}/K)$ and the action of $\Gal(K/\QQ)$, the complex conjugation. For the first one, we use the existence of a surjective morphism $F\colon \Gal(\overline{\QQ}/K)\rightarrow \Cl(R)$ such that the action of $\sigma\in\Gal(\overline{\QQ}/K)$ on abelian varieties corresponds to the tensor product of the associated lattice by a representative of $F(\sigma)$ (and a rescaling of the hermitian form).

Still, for both actions, we need to rigidify the choice of the target objects under the previous equivalences of categories, which are defined only up to $\CC$-isomorphisms. Indeed, unlike \cite{GHR} where the compatibilities between the various abelian varieties and their conjugate were obvious, we could not find a simple way to impose them from abstract nonsense. We therefore use the explicit algebraization by the Weierstrass function for elliptic curves to be able to work out the explicit Galois action on the associated hermitian $R$-lattices (see Proposition \ref{lemration}) and we then extend it, component by component, to products of elliptic curves in order to obtain our main results (Theorems~\ref{KQ} and \ref{QK}). Notice that the main difficulty is to be able to keep the abelian varieties, their isogenies \emph{and} their analytic representation defined over $\overline{\QQ}$ to be able to translate the action of $\Gal(\overline{\QQ}/\QQ)$.

In Section~\ref{sect4}, we look at the case when $\QQ$ is the field of moduli of the indecomposable principally polarized abelian varieties $A\simeq E_1 \times \cdots \times E_g$ with $E_i$ with CM by $R$. We first extend the result of \cite{GHR} showing that if $\QQ$ is the field of moduli then $\Cl(R)$ has exponent dividing $g$. We also show that the Steinitz class of the $R$-lattice associated with $A$ is of order at most $2$ (see Proposition \ref{exponent}). From this, we deduce the surprising Corollary~\ref{corodd} that odd-dimensional $A$ with field of moduli $\QQ$ must be isomorphic to the power of an elliptic curve. At the end of Section~\ref{sect4} we present the computations using the algorithms previously developed. In Table \ref{g2} we give the complete classification of all principally polarized indecomposable abelian surfaces with field of moduli $\QQ$ sorted by discriminants. In Table \ref{g3} we give a similar classification for abelian varieties of dimension $g=3.$ The classifications are complete thanks to the results of Appendix \ref{appA} written by Francesc Fité and Xavier Guitart. Proposition \ref{exponent}, shows that we should consider discriminants of quadratic fields with class group exponent $g$ where $g$ is the dimension of the abelian varieties. This condition is too weak to have an unconditional classification. Indeed, even if it is known that there are only finitely many discriminants of exponent $2$ and $3$, the complete list is known only under the Extended Riemann Hypothesis (see \cite{EKN}). Proposition \ref{prop:g2} shows that a dimension $2$ split Jacobian $(E_1\times E_2)$ with field of moduli $\QQ$ and $E_i$ with CM by the same field $K$ must satisfy $\#\Cl(K)\in\left\{1,2,4\right\}$. There exist quadratic imaginary fields with class group exponent $2$ of order $8$ and $16$ (and no other according to ERH) but the proposition guarantees that we will not find any curve corresponding to these discriminants. Proposition \ref{prop:g3} gives a similar bound in dimension $g=3$ ; $\#\Cl(K)\in\left\{1,3\right\}$. It is interesting to note also that we can run the computations for the exponent $2$ discriminant of class number $8$ and $16$ and, unsurprisingly, we find no Jacobian with field of moduli $\QQ.$ However, the computations are too expensive for the discriminant $\Delta=-4027$ with class group $\Cl(\Delta)\simeq \left(\ZZ/3\ZZ\right)^2$. Hence, even under ERH, the classification would not have been complete without the Appendix \ref{appA}.

\subsection*{Acknowledgments}

I would like to acknowledge Francesc Fit\'e and Xavier Guitart who motivated this project and helped until its fulfillment. As mentioned before, they also wrote the Appendix \ref{appA} which allows the Extended Riemann Hypothesis to be dropped in the classifications. Thank you also for meaningful discussions we had at the COUNT conference.

In addition, I want to thank Marco Streng for his attention and advice along the way of this work.

I would also like to address a special thank to Markus Kirschmer for his advice and patience answering all of my questions about hermitian lattices. He also gave me an extension of the Magma library of \cite{KNRR} to handle more efficiently the classification of free hermitian unimodular $R$-lattices which we used for the computation of the dimension $g=3$ case in Section \ref{g23}.

Finally, I want to thank Harun Kir for the rich discussions we had on this subject and for his precious support for this work.

\section{Complex abelian varieties and complex tori}\label{sect1}

\subsection{Abelian varieties and polarizable tori}

Let us first recall that there is an equivalence of categories between abelian varieties over $\CC$ and the category of polarizable tori given by $\T\colon A\mapsto A(\CC)$, see \cite[Theorem~2.9]{Mil}. A \textit{complex torus} is a quotient of groups $X=V/\Gamma$ with $V$ a complex vector space of finite dimension and $\Gamma$ a $\ZZ$-\textit{lattice} of $V$, i.e., a subgroup of $V$ generating $V$ over the reals. A torus $X=V/\Gamma$, or a lattice $\Gamma\subset V$, is said to be \textit{polarizable} if there exists a positive definite hermitian form $h\colon V\times V\rightarrow \CC$ such that
$$ \im h(\Gamma,\Gamma)\subset \ZZ$$
where $ \im$ is the imaginary part. Morphisms of complex tori are, in particular, morphisms of groups $\varphi\colon X=V/\Gamma\rightarrow X'=V'/\Gamma'$. Such a map can be lifted to a $\CC$-linear map $\varphi\an\colon V\rightarrow V'$ that sends $\Gamma$ to $\Gamma'$ called the \textit{analytic representation} of $\varphi$. The group morphism $\varphi\rat = {\varphi\an}_{|\Gamma}\colon \Gamma\rightarrow\Gamma'$ is called the \textit{rational representation} of $\varphi$. We denote the set of the analytic representations of morphisms between $X$ and $X'$ by $\Hom_\CC(\Gamma,\Gamma')$. We will often consider analytic representations directly as morphisms of polarizable tori.

A surjective morphism $f\colon A\rightarrow B$ between abelian varieties $A$ and $B$ over $\CC$ of equal dimension is called an \textit{isogeny}; its kernel is finite and its cardinality $\# \ker f$ is called the \textit{degree} of $f$, denoted $\deg f$. Via the functor $\T$, we will also call $\varphi \colon V/\Gamma \longrightarrow V/\Gamma'$ an isogeny. The degree of $\varphi$ is defined in the same way as $\deg\varphi=\#\ker \varphi$. Moreover, it satisfies
$$\deg \varphi=\bracket{\Gamma'\colon \varphi\an(\Gamma)}=\deg f$$
with $\bracket{\Gamma'\colon \varphi\an(\Gamma)}=\# (\Gamma'/\varphi\an(\Gamma))$, the index of $\varphi\an(\Gamma)$ in $\Gamma'.$

\subsection{Duality}

Let $V$ be a $g$-dimensional complex vector space and $\Gamma\subset V$ a $\ZZ$-lattice. The dual lattice associated with $\Gamma$ is defined by $$\widehat{\Gamma}=\left\{\ell\in V^*\mid  \im \ell(\Gamma)\subset \ZZ\right\}$$
where $V^*$ denotes the set of antilinear forms $\ell\colon V\rightarrow \CC$. This defines the dual complex torus of $V/\Gamma$, denoted $\widehat{V/\Gamma}=V^*/\widehat{\Gamma}$.\\
If $A$ is a complex abelian variety such that $A(\CC)\simeq V/\Gamma$ then there exists an isomorphism $\widehat{A}(\CC)\simeq \widehat{V/\Gamma}$, where $\widehat{A}$ is the dual abelian variety of $A$ (see \cite{Mum}). Moreover, for a morphism $f\colon A\rightarrow B$ and $\varphi=\T(f)\colon V/\Gamma\rightarrow V'/\Gamma'$ we have $\T(\widehat{f})\an=\varphi\an^*$ with
$$ \begin{array}{rrl}
 \varphi\an^*\colon &V'^* &\longrightarrow V^*\\
  &  \ell &\longmapsto \ell\circ \varphi\an
\end{array}$$
where $\widehat{f}\colon \widehat{B}\rightarrow \widehat{A}$ is the dual morphism of $f$ (see \cite[Section~2.4]{Bir}).

\subsection{Polarizations and hermitian forms}

Let $A$ be an abelian variety and let $\mathcal{L}$ be a line bundle. We consider the map
$$ \begin{array}{rrl}
 a_\mathcal{L}\colon &A &\longrightarrow \widehat{A}\\
  &  x&\longmapsto t_x^*\mathcal{L}\otimes\mathcal{L}^{-1}.
\end{array}$$
with $t_x$ the translation by $x$ map. Consider isomorphisms $A(\CC)\simeq V/\Gamma$ and $\widehat{A}(\CC)\simeq V^*/\widehat{\Gamma}$. Let $h=c_1(\mathcal{L})$ be the first Chern class of $\mathcal{L}$, which we identify with a hermitian form on $V$ (see \cite[Lemma~2.4.5]{Bir}). Then the map $\rho_h\in \Hom_\CC(\Gamma,\widehat{\Gamma})$ defined by
$$ \begin{array}{rrl}
 \rho_h\colon &V &\longrightarrow V^*\\
  &  v&\longmapsto h(v,\_).
\end{array}$$
is the analytic representation of $a_\mathcal{L}$. A polarization on an abelian variety $A$ is an isogeny $a_\mathcal{L}$ with $\mathcal{L}$ an ample line bundle. In this case, its first Chern class $h$ is a positive definite hermitian form (see \cite[Proposition 4.5.2]{Bir}).

A pair $(A,a)$ with $A$ an abelian variety and $a$ a polarization is called a \textit{polarized abelian variety}. Morphisms of polarized abelian varieties are defined by maps $f\colon (A,a)\rightarrow (B,b)$ such that $\widehat{f}bf=a$  and we call them \textit{polarized isogenies}. The distinction between polarized and non-polarized isogenies is essential; when we do not specify that an isogeny between polarized varieties is itself polarized, it refers to a morphism between the underlying abelian varieties without their polarizations. A \textit{polarized torus} is a pair $(V/\Gamma,\rho_h)$, also denoted by $(\Gamma,h)$, with $V/\Gamma$ a complex torus and $\rho_h\in\Hom_\CC(\Gamma,\widehat{\Gamma})$ induced by a positive definite hermitian form $h$, i.e., $\rho_h(v)=h(v,\_)$. We analogously define morphisms $\varphi\colon (V/\Gamma,\rho_h)\rightarrow (V'/\Gamma',\rho_{h'})$ between polarized tori. Their analytic representation must satisfy $\varphi_{\text{an}}^*\rho_{h'}\varphi_{\text{an}}=\rho_h$ which means that for $v,w\in V$, $$\rho_h(v)(w)=h(v,w)=h'(\varphi_{\text{an}}(v),\varphi_{\text{an}}(w)).$$ In particular, analytic representations of polarized isogenies define isometries on the associated hermitian vector spaces. In the following we will call a pair $(V,h)$ made of a $\CC$-vector space and $h$ a positive definite hermitian form a \emph{hermitian space}. Since the functor $\T$ is an equivalence of categories, it also defines an equivalence of categories $\PT$ between polarized abelian varieties and polarized tori $(A,a)\mapsto(X=\T(A),\rho_h=\T(a))$.

For any group schemes $A$ and $B$ over a field $k$ there is an isomorphism of groups
$$A(k)\times B(k)\simeq (A\times_k B)(k).$$

We can apply it to abelian varieties over $\CC$. For $A(\CC)\simeq V_A/\Gamma_A$, $B(\CC)\simeq V_B/\Gamma_B$ and $(A\times B)(\CC)\simeq V/\Gamma$ we have $$V/\Gamma\simeq (V_A/\Gamma_A)\times (V_B/\Gamma_B).$$
Hence $\Gamma\simeq \Gamma_A\times\Gamma_B$ and in terms of categories, the functor $\T$ commutes with products. In the same way, the functor $\PT$ also commutes with products.

\section{Totally split CM abelian varieties and $R$-module structure}\label{sect2}
Let $R$ be the maximal order of a quadratic imaginary field $K=\QQ(\sqrt{-d})$ with $d$ a positive square-free integer. An $R$-lattice is a finitely presented, torsion-free $R$-module. We denote by $\RLat$ the category of $R$-lattices.
In this section we want to show that there exist equivalences of categories
\begin{itemize}
    \item between $\AVR$, the category of complex abelian varieties isomorphic to a product of elliptic curves with CM by $R$, and $\RLat$ ;
    \item between $\PAV$, the category of complex polarized abelian varieties isomorphic to a product of elliptic curves with CM by $R$, and $\RhLat$, the category of integral hermitian $R$-lattices, i.e. $R$-lattices $L$ equipped with a positive definite hermitian form $H$ on $KL=L\otimes_R K$, such that $H(L,L)\subset R$.
\end{itemize}
For the rest of the article we fix field extensions
$$\QQ\longrightarrow K\longrightarrow \overline{\QQ}\longrightarrow\CC.$$
\subsection{Elliptic curve with complex multiplication over $\CC$}\label{sec2.1}

Let $R=\ZZ[\omega]$ be the maximal order of an imaginary quadratic field $K=\QQ(\sqrt{\Delta})$ with $\Delta<0$ the discriminant of $K/\QQ$ and $\alpha_R=\im \omega >0$. Notice that $\alpha_R$ does not depend on the chosen generator $\omega$ with positive imaginary part.

Let $E$ be an elliptic curve over $\CC$ with complex multiplication by $R$, i.e., there exists a ring isomorphism $\End(E)\simeq R$. Let $\Lambda\subset \CC$ be a lattice such that there is an isomorphism $\eta\colon  \CC/\Lambda \rightarrow E(\CC)$ (of Lie groups). Then we will denote $E$ by $E_\Lambda$. By \cite[Proposition II.1.1.]{Sil} there is a unique isomorphism $$[.]_E\colon R\rightarrow \End(E)$$
characterized, for any $\alpha\in R,$ by the commutativity of the diagram of Figure \ref{FigRiso}.
\begin{figure}[ht]
\[
\xymatrix{\CC/\Lambda \ar[r]^{z\mapsto\alpha z}\ar[d]_{\eta}&\CC/\Lambda  \ar[d]^{\eta} \\
E_\Lambda(\CC) \ar[r]^{[\alpha]} & E_\Lambda(\CC) .}
\]
\caption{The bracket isomorphism}\label{FigRiso}
\end{figure}
\newline
This isomorphism does not depend on $\eta$ and we will use it when we identify $R$ with $\End(E).$

According to \cite[Proposition~2.1]{Sil} all CM elliptic curves over $\CC$ admit a model over $\overline{\QQ}.$ We denote by $\Ell(R)$ the set of isomorphism classes of elliptic curves over $\overline{\QQ}$ with CM by $R.$ We recall from \cite[Proposition~1.2]{Sil} that $\Cl(R)$, the class group of $R,$ acts simply transitively on $\Ell(R)$ and the action is given by $$\fa\star E_\Lambda=E_{\fa^{-1}\Lambda},$$
where $\fa$ is a fractional ideal of $R.$ The action is well defined on the isomorphism classes because another representative $\fb$ of the class of $\fa$ in $\Cl(R)$ differs from $\fa$ by a scalar and then $\fa^{-1}\Lambda$ and $\fb^{-1}\Lambda$ are homothetic lattices.

\subsection{Totally split complex tori and $R$-lattice structure}
Let $L$ be an $R$-lattice, i.e., a finitely presented, torsion-free $R$-module. Since $R$ is maximal, $L$ is a module over a Dedekind domain and by \cite[Theorem 81.3]{OMe}, we can always write $L$ as a sum $$L=\bigoplus_{i=1}^g \fa_i x_i,$$ with $(x_1,\dots,x_g)$ a basis of $KL$ and $\fa_1,\dots,\fa_g$ fractional ideals of $R$. The family $(\fa_i,x_i)$ is called a \textit{pseudo-basis} of $L$ and the \textit{Steinitz class} $\st(L)$ of $L$ is defined by the class of the product $\fa_1\cdots\fa_g$ in $\Cl(R)$. The Steinitz class of a lattice together with its rank determines its $R$-isomorphism class by \cite[Theorem~13]{KCon2}.\\
For an $R$-ideal $\fa$, the norm of $\fa$ is defined by $N(\fa)=\# \left( R/\fa\right)$. We can extend the definition of the norm to any fractional ideal by the equality $N(\lambda\fa)=|\lambda|^2N(\fa)$ for any $\lambda\in K$.

Throughout this article, different notations will be used to denote lattices depending on the context:
\begin{itemize}
    \item The notation $\Gamma$ will denote a lattice in a $\CC$-vector space.
    \item The notation $\Lambda$ will denote a lattice in $\CC$ only.
    \item The notation $L$ will denote a lattice over an imaginary quadratic order $R$.
\end{itemize}
Thus, the notations $\Gamma$ and $\Lambda$ will be reserved for the analytical aspect, for complex tori, while $L$ will be used for the algebraic aspect. The functor $\R$, defined in Theorem \ref{theormod}, connects complex tori to hermitian lattices, and these notations will help clarify the category in which we are working. For the same reasons we write $h$ for the hermitian forms coming from polarizable tori and $H$ for $R$-lattices.
\begin{theo}\label{theormod}
Let $R$ be an order in a quadratic imaginary field $K$. There is an equivalence of categories between the category $\AVR$ of abelian varieties isomorphic to a product of elliptic curves with CM by $R$, and the category $\RLat$ of $R$-lattices given by
$$\begin{array}{rcl}
 \R\colon &\AVR &\rightarrow \RLat\\
  & A \text{ s.t. } A(\CC)\simeq V/\Gamma &\mapsto \Gamma\\
  & (\CC L)/L& \mapsfrom L.
  \end{array}$$
  on objects and, for arrows, $f\colon A(\CC)\simeq V/\Gamma\rightarrow A'(\CC)\simeq V'/\Gamma',$ given by $\R(f)=f\rat.$ 
\end{theo}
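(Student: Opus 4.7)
The plan is to leverage the equivalence $\T$ from Section~\ref{sect1} between complex abelian varieties and polarizable tori, and enrich the torus side with an $R$-module structure coming from the CM data on each elliptic factor.

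Given $A \in \AVR$ with decomposition $A \simeq E_{\Lambda_1} \times \cdots \times E_{\Lambda_g}$, applying $\T$ (which commutes with products) yields $A(\CC) \simeq V/\Gamma$ with $V = \CC^g$ and $\Gamma = \Lambda_1 \oplus \cdots \oplus \Lambda_g$. Each bracket isomorphism $[\cdot]_{E_{\Lambda_i}}$ makes $\Lambda_i$ a rank one $R$-module (a fractional ideal), and the induced $R$-action on the $i$-th component of $V$ is scalar multiplication via the fixed embedding $R \hookrightarrow \CC$. Summing, $\Gamma$ becomes a direct sum of fractional ideals, hence an $R$-lattice. On morphisms I set $\R(f) = f\rat$; $R$-linearity of $f\rat$ is automatic, since $f\an \colon V \to V'$ is $\CC$-linear and $R$ acts on both sides by componentwise scalar multiplication via $R \hookrightarrow \CC$.

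For the quasi-inverse, given an $R$-lattice $L$ with pseudo-basis $L = \bigoplus_i \fa_i x_i$, I would form $V := L \otimes_R \CC = \bigoplus_i \CC \cdot x_i$ and obtain $V/L \simeq \bigoplus_i \CC/\fa_i$, a product of CM elliptic curves, which is polarizable and thus lies in $\AVR$. Faithfulness of $\R$ is immediate: $\Gamma$ contains an $\RR$-basis of $V$, so $f\rat$ determines $f\an$, and by the equivalence $\T$ determines $f$. For fullness, any $R$-linear $\psi \colon \Gamma \to \Gamma'$ extends uniquely to the $\CC$-linear map $\psi \otimes_R \id_\CC \colon V \to V'$, which sends $\Gamma$ into $\Gamma'$ and so descends to a morphism of polarizable tori, pulled back by $\T$ to a morphism of abelian varieties.

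The main subtlety is bookkeeping the compatibilities: one must verify that the $R$-action on $\Gamma$ (via the bracket isomorphism) and the $\CC$-action on $V$ are both induced by the same fixed embedding $R \hookrightarrow \CC$ (the one with $\alpha_R > 0$), and that different decompositions of $A$ produce isomorphic $R$-lattices (using that $R$ is central in each $\End(E_{\Lambda_i})$). Once these compatibilities are pinned down, the rest reduces to formal manipulations with $\T$ and the structure theorem for finitely generated torsion-free modules over the Dedekind domain $R$.
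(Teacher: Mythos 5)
Your proposal is correct and follows essentially the same route as the paper: the $R$-module structure on $\Gamma$ is just scalar multiplication in $V$ coming from the normalized CM action on each factor (hence independent of the chosen decomposition), the quasi-inverse is built from a pseudo-basis giving $V/L\simeq \bigoplus_i \CC/\fa_i$, and full faithfulness rests on the fact that $R$-linearity of a lattice map forces $\CC$-linearity of its extension to $V$. The only minor divergence is that the paper certifies polarizability of $(\CC\otimes L)/L$ by exhibiting an explicit hermitian form (rescaling the standard form on $R^g$ through an isogeny $L\rightarrow R^g$), whereas you invoke that a product of CM elliptic curves is polarizable; both are valid, and your $\psi\otimes_R \id_\CC$ argument for fullness is in fact a welcome expansion of a step the paper merely declares clear.
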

\begin{proof}
Let $A\in \AVR$ such that $\T(A)=A(\CC)=V/\Gamma.$ There is an isomorphism $\varphi\colon V/\Gamma\rightarrow \CC^g/\bigoplus \Lambda_i.$ The $\ZZ$-lattices $\Lambda_i$ have a natural structure of and $R=\End(\Lambda_i)$-module. The lattice $\Gamma$ is stable by multiplication by $R$. Indeed,
$$\varphi\an(R\Gamma)=R\varphi\an(\Gamma)=R\bigoplus_i\Lambda_i=\bigoplus_i \Lambda_i=\varphi\an(\Gamma)$$ so $R\Gamma=\Gamma.$
Moreover, the isomorphism $\varphi$ endows $\Gamma$ with a structure of an $R$-module in a natural way \begin{equation}\label{eq1}
    r.\gamma = \varphi\an^{-1}(r\varphi\an(\gamma))=r\gamma, \text{ for } r\in R,\gamma\in\Gamma.
\end{equation}
Hence $\Gamma$ has a structure of $R$-module and we can see in ($\ref{eq1}$) that this structure does not depend on the isomorphism $\varphi$ we chose.

Conversely, let $L$ be an $R$-lattice and $(\fa_i,x_i)$ a pseudo-basis of it. Since the $\fa_i$ are fractional ideals, there exists an integer $n$ such that $n\fa_i\subset R$ for all $i,$ so the multiplication by $n$ map defines an isogeny $\Hom_\CC(L,R^g).$ If we endow $R^g$ with the canonical hermitian form $h_0(x,y)=\trsp x\overline{y}$, we have $h_0(R^g,R^g)=R$, hence $$\im h_0(R^g,R^g)=(\im \omega)\ZZ=\alpha_R\ZZ.$$

Thus, $\left(R^g,\frac{1}{\alpha_R}h_0\right)$ defines a polarized torus and so does $\left(L,\frac{n^2}{\alpha_R}h_0\right)$. This proves that the underlying $\ZZ$-lattice $\Gamma$ of an $R$-lattice $L$ is polarizable.

Finally, given a morphism $f\colon V/\Gamma\rightarrow V'/\Gamma',\R(f)=f\rat\colon \Gamma\rightarrow \Gamma'$. Since $f\rat={f\an}_{|\Gamma}$ and $f\an$ is $\CC$-linear it is then $R$-linear. Hence $\R$ maps arrows in a full and faithful way.

Hence there is an equivalence of categories between $R$-lattices and polarizable tori $X$ such that there exists an isomorphism $X\rightarrow \CC^g/\bigoplus\Lambda_i.$ The latter being in equivalence with the category of abelian varieties isomorphic to a power of elliptic curves with CM by $R.$ This proves the equivalence of categories between $\AVR$ and $\RLat$.
\end{proof}
It may be worth elaborating why the CM case is so special. If we consider the functor $\R$ from abelian varieties over $\CC$ without restriction to the category of $\ZZ$-lattices of a finite dimensional $\CC$-vector space then $\R$ is not essentially surjective. Indeed, some $\ZZ$-lattices $\Gamma$ of $V$ of dimension greater than $2$ are not polarizable. Even if we restrict $\R$ on its essential image it is not full. Indeed, even in dimension $1$ there are morphisms of polarizable $\ZZ$-lattices (i.e., morphisms of groups) which are not the restriction of a $\CC$-linear map.

Hence the structure of a $\ZZ$-module is not enough. Fortunately, considering the $R$-module structure given by the complex multiplication makes $\R$ an equivalence.

\subsection{Polarizable tori and integral lattices}\label{poltori}

A \textit{hermitian $R$-lattice} is defined as a pair $(L,H)$ with $L$ an $R$-lattice and $H$ a positive definite hermitian form on the ambient space $KL$. The \textit{scale} of a hermitian lattice $(L,H)$ is defined as the set $\fs(L)=H(L,L)\subset K$. It is a fractional ideal of $K$ (see \cite[Remark 2.3.4]{Kir16}). The \emph{dual lattice} $L^\#$ of a hermitian lattice is the lattice defined by $L^\#=\left\{v\in KL\mid H(v,L)\subset R\right\}$. For $\fa$ a fractional ideal of $K,$ we say that $(L,H)$ is \textit{$\fa$-modular} if $\fa L^\#=L$. If $(L,H)$ is $\fa$-modular then its scale satisfies $\fs(L)=\fa$. A hermitian $R$-lattice $(L,H)$ is said to be \textit{integral} if
$$H(L,L)\subset R,$$
i.e., it is integral if its scale is an integral ideal (or equivalently $L\subset L^\#$). An $R$-modular hermitian lattice $(L,H)$ is called unimodular (this is equivalent to $(L,H)$ being integral and its scale being $\fs(L) = R$). One also defines the \textit{volume} of a hermitian lattice $(L,H)$ as the fractional ideal
$$\fv(L)=\left(\prod_{i=1}^g N(\fa_i)\right)\det(G(x_1,\dots,x_g))R$$
where $G(x_1,\dots,x_g)=(H(x_i,x_j))_{1\leq i,j\leq g},$ the Gram matrix of $(x_1,\dots,x_g).$ A hermitian lattice $(L,H)$ is $\fa$-modular if, and only if,
$$\fv(L)=\fa^g\text{ and } \fs(L)=\fa.$$
(see \cite[Section~2]{Hof}).

In this section we explain the link between polarized tori and integral lattices.

Let $(V,H)$ be a hermitian $\CC$-vector space and let $\alpha\in\RR_{>0}$. We denote by $V^\alpha$ the vector space $V$ provided with the hermitian form $H^\alpha(x,y)=\alpha H(x,y)$. For a lattice $L$ in $(V,H)$ we denote by $L^\alpha$ the hermitian lattice $L$ regarded in the hermitian space $(V^\alpha,H^\alpha)$ as in \cite[Section~82J.]{OMe}.

\begin{lem}\label{integ}
Let $R=\ZZ[\omega]$ be an order of an imaginary quadratic field and $\fa$ be a sub-$R$-module in $\CC$. Then $\fa$ is an integral ideal in $R$ if, and only if, $\im \fa\subset \alpha_R \ZZ$ with $\alpha_R=\im(\omega)$.
\end{lem}
\begin{proof} The left-to-right direction is straightforward.

Let $a=x+y\omega\in\fa$ with $x,y\in\RR$. Since $\im a=y\alpha_R\in\alpha_R\ZZ$ we have $y\in \ZZ$. Moreover, $\overline{\omega}\in R$ so $a\overline{\omega}\in \fa$ and $\im a\overline{\omega}=-x\alpha_R$ so $x\in\ZZ$. Hence $a\in R$ and therefore $\fa\subset R$.
\end{proof}
\begin{prop}\label{propscale}
Let $(V/\Gamma,\rho_h)$ be a polarized torus such that $\Gamma\simeq \bigoplus_{i=1}^g \Lambda_i$ with $\End_\CC(\Lambda_i)\simeq R$ with $R=\ZZ[\omega]$. Then $\fs(\Gamma^{\alpha_R})$ is an integral ideal of $R$. Moreover, we have the equality
$$\deg \rho_h=\bracket{(\Gamma^{\alpha_R})^\#\colon\Gamma^{\alpha_R}}.$$
\end{prop}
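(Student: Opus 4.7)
The plan is to prove both claims by comparing the pairing $h$ on the ambient $\CC$-vector space to its rescaling by $\alpha_R$ and invoking Lemma \ref{integ} on the relevant $R$-modules.

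First, for the integrality claim, I would show that $h(\Gamma,\Gamma)$ is naturally an $R$-submodule of $\CC$. Since $\Gamma$ is already an $R$-lattice by Theorem \ref{theormod} and $h$ is $\CC$-linear in the first variable, we have $r\cdot h(\gamma,\gamma') = h(r\gamma,\gamma') \in h(\Gamma,\Gamma)$ for every $r\in R$ and $\gamma,\gamma'\in\Gamma$; conjugate-linearity in the second variable gives the same conclusion for $\bar r$, but $R=\bar R$ anyway. The polarization assumption gives $\im h(\Gamma,\Gamma)\subseteq \ZZ$, hence $\im(\alpha_R h(\Gamma,\Gamma))\subseteq \alpha_R\ZZ$. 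Applying Lemma \ref{integ} to the $R$-submodule $\alpha_R h(\Gamma,\Gamma)\subseteq\CC$ yields $\fs(\Gamma^{\alpha_R})=\alpha_R h(\Gamma,\Gamma)\subseteq R$, which is the required integrality.

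Next, for the degree formula, I would show that the $\CC$-linear isomorphism $\rho_h\an\colon V\to V^*,\ v\mapsto h(v,\cdot)$ sends $(\Gamma^{\alpha_R})^{\#}$ bijectively onto $\widehat\Gamma$. If $v\in (\Gamma^{\alpha_R})^{\#}$, then $\alpha_R h(v,\Gamma)\subseteq R$, so $\im h(v,\Gamma)\subseteq \ZZ$, hence $\rho_h\an(v)\in\widehat\Gamma$. Conversely, if $\rho_h\an(v)\in\widehat\Gamma$, then $\alpha_R h(v,\Gamma)$ is an $R$-submodule of $\CC$ (by the same argument as above, using that $h(v,r\gamma)=\bar r h(v,\gamma)$ and $\bar R = R$) whose imaginary part lies in $\alpha_R\ZZ$, so by Lemma \ref{integ} it lies in $R$, giving $v\in (\Gamma^{\alpha_R})^{\#}$. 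Since $h$ is positive definite, $\rho_h\an$ is a $\CC$-linear isomorphism, so it preserves indices.

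Putting these together, I conclude
\[
\deg\rho_h = (\widehat\Gamma : \rho_h\an(\Gamma)) = ((\Gamma^{\alpha_R})^{\#} : \Gamma) = ((\Gamma^{\alpha_R})^{\#} : \Gamma^{\alpha_R}),
\]
where the last equality holds because $\Gamma^{\alpha_R}$ and $\Gamma$ have the same underlying $\ZZ$-module, only the hermitian form being rescaled. The only nontrivial step is the backward direction of the bijection between $(\Gamma^{\alpha_R})^{\#}$ and $\widehat\Gamma$, and the key point there is exactly Lemma \ref{integ}, which allows one to upgrade an imaginary-part condition into a membership condition in $R$; everything else is bookkeeping.
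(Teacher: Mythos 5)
Your proof is correct and follows essentially the same route as the paper: both parts rest on Lemma \ref{integ}, first applied to the $R$-submodule $\alpha_R h(\Gamma,\Gamma)$ to get integrality of the scale, and then used to identify $\rho_h^{-1}(\widehat{\Gamma})=\{v\in V,\ \im h(v,\Gamma)\subseteq\ZZ\}$ with $(\Gamma^{\alpha_R})^{\#}$ before comparing indices under the isomorphism $\rho_h$. Your version merely spells out a bit more explicitly why the relevant sets are $R$-submodules and why the index is unchanged by the rescaling, which the paper leaves implicit.
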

\begin{proof}
We know that $h(\Gamma,\Gamma)$ is a $R$-module in $\CC$ and since $(\Gamma,h)$ is a polarizable torus we must have $\im h(\Gamma,\Gamma)\subset \ZZ$. Hence by Lemma \ref{integ}, we have $\fs(\Gamma^{\alpha_R})=\alpha_R h(\Gamma,\Gamma)\subset R$.
Moreover,
\begin{align*}
    \deg \rho_h&=\bracket{\widehat{\Gamma}\colon \rho_h(\Gamma)}&\\
    &=\bracket{\rho_h^{-1}(\widehat{\Gamma})\colon\Gamma}&\\
    &= \#\left\{v\in V\mid \im h(v,\Gamma)\subset \ZZ\right\}/\Gamma&\\
    &= \#\left\{v\in V\mid (\im\omega)h(v,\Gamma)\subset R\right\}/\Gamma &\text{(by Lemma \ref{integ})}\\
    &= \# ((\Gamma^{\alpha_R})^\#/\Gamma)&\\
    &= \bracket{(\Gamma^{\alpha_R})^\#\colon\Gamma^{\alpha_R}}.&
\end{align*}
\end{proof}
\comm{
\begin{rem}
If moreover the lattice $(\Gamma^{\alpha_R},h^{\alpha_R})$ is $\fs(\Gamma^{\alpha_R})$-modular then
$$\deg\rho_h=\bracket{(\Gamma^{\alpha_R})^\#\colon\fs(\Gamma^{\alpha_R}}.(\Gamma^{\alpha_R})^\#)=N(\fs(\Gamma^{\alpha_R}))^g.$$
Hence classes of principally polarized tori $(\Gamma,h)$, i.e., with $\deg\rho_h=1$ correspond to isometry classes of integral lattices whose scale has norm $1$ and it is an integral ideal by Proposition \ref{propscale}. So it corresponds to unimodular lattices.
\end{rem}}
Let $\Lambda\subset \CC$ be a lattice such that $\End(\Lambda)=R.$ We define $\PTR$ the subcategory of polarized tori $(X=V/\Gamma,\rho_h)$, with $\Gamma\simeq \bigoplus \Lambda_i$ and $\End(\Lambda_i)\simeq R$. We conclude the section with the following theorem.
\begin{theo}\label{equivR}
 With the notation above, there is an equivalence of categories given on objects by
$$ \begin{array}{rrl}
  &\PTR &\rightarrow \RhLat\\
  & (X=V/\Gamma,\rho_h) &\mapsto (\Gamma^{\alpha_R},h^{\alpha_R})\\
  & ((\CC L)/L,H^{1/\alpha_R}) &\mapsfrom (L,H).
\end{array}$$
\end{theo}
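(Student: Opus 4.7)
The plan is to build the two functors and show they are quasi-inverse, leveraging Theorem~\ref{theormod} and Proposition~\ref{propscale} so that the work reduces to tracking how the rescaling by $\alpha_R$ interacts with the hermitian structure.

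First I would check that the forward functor is well defined. On objects, given $(V/\Gamma,\rho_h)\in\PTR$, Theorem~\ref{theormod} already equips $\Gamma$ with an $R$-module structure making it an $R$-lattice, and Proposition~\ref{propscale} yields $\fs(\Gamma^{\alpha_R})=\alpha_R h(\Gamma,\Gamma)\subseteq R$, so $(\Gamma^{\alpha_R},h^{\alpha_R})$ is integral; the form $h^{\alpha_R}=\alpha_R h$ stays positive definite because $\alpha_R>0$. On morphisms, a polarized morphism $\varphi\colon (V/\Gamma,\rho_h)\rightarrow(V'/\Gamma',\rho_{h'})$ satisfies $h'(\varphi\an(v),\varphi\an(w))=h(v,w)$, and rescaling both sides by $\alpha_R$ shows that $\varphi\rat={\varphi\an}_{|\Gamma}$ is an isometry of the rescaled hermitian $R$-lattices; its $R$-linearity is the content of Theorem~\ref{theormod}.

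Next I would define the candidate inverse on $(L,H)\in\RhLat$. Write $L=\bigoplus_{i=1}^g\fa_i x_i$ from the pseudo-basis decomposition, so that $L\otimes_R\CC$ canonically becomes a sum of lines $\CC x_i$ containing the lattices $\fa_i x_i$. Because $R$ is maximal each $\fa_i$ has $\End(\fa_i)=R$, which places the torus $(L\otimes\CC)/L$ in the essential image of $\PTR$. Integrality $H(L,L)\subseteq R$ together with Lemma~\ref{integ} gives $\im H(L,L)\subseteq \alpha_R\ZZ$, hence $\im\bigl(\tfrac{1}{\alpha_R}H\bigr)(L,L)\subseteq\ZZ$, so $H^{1/\alpha_R}$ defines a polarization on $(L\otimes\CC)/L$. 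The same rescaling argument, run backwards, shows that this assignment is functorial: an isometry $(L,H)\to(L',H')$ is automatically $\CC$-linear after tensoring, and the equality of hermitian forms on $L,L'$ transfers to an equality of the associated $\rho_{H^{1/\alpha_R}}$-compatible maps.

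To finish I would check that the two compositions are naturally isomorphic to the identity. Starting from $(V/\Gamma,\rho_h)$, the forward then backward image is $\bigl((\Gamma\otimes\CC)/\Gamma,\;(h^{\alpha_R})^{1/\alpha_R}\bigr)=\bigl((\Gamma\otimes\CC)/\Gamma,\rho_h\bigr)$, and the canonical identification $\Gamma\otimes_R\CC\xrightarrow{\sim} V$ supplies the natural isomorphism of polarized tori. Starting from $(L,H)$, the backward then forward image gives $(L,H^{1/\alpha_R\cdot\alpha_R})=(L,H)$ on the nose. Combined with the object- and morphism-level checks above, this proves that both functors are quasi-inverse equivalences.

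I do not expect a serious obstruction: every step is essentially bookkeeping with the scalar $\alpha_R$ and a direct application of the two previous results. The only point that requires a moment's care is the essential image condition for the inverse functor — one has to observe that writing $L$ through a pseudo-basis automatically decomposes $(L\otimes\CC)/L$ as a product of CM tori $\CC/\fa_i$, which is exactly where maximality of $R$ is used.
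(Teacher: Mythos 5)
Your proposal is correct and follows essentially the same route as the paper: the paper derives Theorem~\ref{equivR} from Theorem~\ref{theormod} (the underlying $R$-module equivalence), Lemma~\ref{integ} and Proposition~\ref{propscale} (integrality of the rescaled scale), with the polarization/hermitian-form correspondence being exactly the rescaling by $\alpha_R$ that you track. Your explicit construction of the quasi-inverse via a pseudo-basis and the verification of the two compositions simply fills in bookkeeping the paper leaves implicit, so there is nothing to object to.
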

Since $\PTR$ is equivalent to the category $\PAV$ via the functor $\T$, we have the equivalence between $\PAV$ and $\RhLat.$ We call this functor $\Rh$ and note that it satisfies $$ \begin{array}{rrl}
 \Rh\colon &\PAV &\rightarrow \RhLat\\
  & (A,a) &\mapsto (\R(A)^{\alpha_R},\R(a)^{\alpha_R}).
  \end{array}$$
Since every elliptic curve over $\CC$ with CM has a model over $\overline{\QQ}$, the category of polarized abelian varieties over $\Qbar$ isomorphic to a product of elliptic curves $E_i$ with CM by $R$ is equivalent to $\PAV$ by the base change functor
$$A\mapsto A_\CC.$$
Note that morphisms over $\CC$ of abelian varieties over $\Qbar$ isomorphic to a product of CM elliptic curves are actually defined over $\Qbar$ by \cite[Theorem~2.2.(c)]{Sil}.

\section{Action of $\GalQQ$}\label{sect3}

The \textit{field of moduli} of a polarized abelian variety $(A,a)$ over $\Qbar$ is the field fixed by the subgroup
$$\left\{\sigma\in\GalQQ\mid (A^\sigma,a^\sigma)\simeq (A,a)\right\}.$$

Given a maximal order in an imaginary quadratic field $K$ we would like to construct an algorithm to enumerate all the isomorphism classes of $\PAV$ which have field of moduli $\QQ.$ In order to do this we first need to understand the action of $\GalQQ$ on the hermitian lattices through the functor $\Rh$ described in Section \ref{sect2}. In other words, given $(A,a)\in\PAV$ and $\sigma\in \GalQQ$ we want to understand the isometry class of $\Rh(A^\sigma,a^\sigma)$ in terms of $\Rh(A,a)$ and $\sigma.$

In order to do so, we need to understand the action of $\GalQK$ and the action of $\Gal(K/\QQ)$, by complex conjugation, separately in order to recover the action of $\GalQQ\simeq \GalQK\rtimes \Gal(K/\QQ).$ We describe the action of $\Gal(K/\QQ)$ and $\GalQK$ in Theorem \ref{KQ} and Theorem \ref{QK} respectively. The end of this section is devoted to their proofs.

Let us first introduce the necessary notation.

Let $(L,H)$ be a hermitian lattice. Let $\iota\colon (KL,H)\rightarrow (K^g,H')$ be an isometry. We define $(\overline{L},\overline{H})$ by $\overline{L}=\iota^{-1}\overline{\iota(L)}$ and $$\overline{H}(x,y)=\overline{H(\iota^{-1}\overline{\iota(x)},\iota^{-1}\overline{\iota(y)})}=\overline{H'(\overline{\iota(x)},\overline{\iota(y)})}$$
where $\overline{\cdot}$ refers to the complex conjugation which is the unique non-trivial automorphism of $\Gal(K/\QQ).$
The isometry class of $(\overline{L},\overline{H})$ is independent of the choice of $\iota$. We can now describe the action of the complex conjugation.
\begin{theo}[Description of the action of $\Gal(K/\QQ)$]\label{KQ}
Let $(A,a)\in\PAV$ considered over $\Qbar$ and let $\Rh(A,a)=(L,H)$. Then there is an isometry
$$\Rh\left(\overline{A},\overline{a}\right)\simeq \left(\overline{L},\overline{H}\right).$$
\end{theo}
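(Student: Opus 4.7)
The plan is to reduce to the elliptic-curve case componentwise and use the explicit Weierstrass parametrization to track complex conjugation.

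First, I would fix a concrete model. Since $\Cl(R)$ acts transitively on $\Ell(R)$, choose an isomorphism $A \simeq E_{\fa_1} \times \cdots \times E_{\fa_g}$ over $\overline{\QQ}$, where $E_{\fa_i}$ is the elliptic curve with analytic model $\CC/\fa_i$ for a fractional ideal $\fa_i \subseteq K \subset \CC$. Because $\T$ and $\PT$ commute with products, so do $\R$ and $\Rh$, and therefore $\R(A) = L = \bigoplus_i \fa_i \subseteq K^g$, while the polarization $a$ corresponds to a positive definite hermitian form $h$ on $\CC^g$ whose Gram matrix $M$ in the standard basis, after rescaling by $\alpha_R$, has entries in $R$ by integrality.

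Second, I would apply the theorem's construction of $(\overline L,\overline H)$ with $\iota = \id\colon KL = K^g \to K^g$. A direct expansion of $\overline H(x,y) = \overline{H(\overline x,\overline y)}$ shows that the Gram matrix of $\overline H$ is the entrywise conjugate $\overline M$, and $\overline L = \bigoplus_i \overline{\fa_i}$. The independence from $\iota$ claimed before the theorem can be verified by a separate elementary base-change computation.

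Third, I would compute $\Rh(\overline A,\overline a)$ directly. Conjugating the Weierstrass equations $y^2 = 4x^3 - g_2(\fa_i)x - g_3(\fa_i)$ and using the identity $g_k(\overline{\fa_i}) = \overline{g_k(\fa_i)}$ identifies $\overline{E_{\fa_i}}$ with $E_{\overline{\fa_i}}$ over $\overline{\QQ}$, with analytic model $\CC/\overline{\fa_i}$. Chasing the bracket diagram of Figure~\ref{FigRiso} through this conjugation shows that, under this identification, $[\alpha]_{E_{\overline{\fa_i}}}$ remains ordinary multiplication by $\alpha$, so the $R$-lattice attached to $\overline A$ is $\bigoplus_i \overline{\fa_i}$, coinciding with $\overline L$ from the previous step.

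Finally, I would match the hermitian forms. The antiholomorphic map $\CC^g \to \CC^g,\ z \mapsto \overline z$ transports the conjugate complex structure on $\overline V = \mathrm{Lie}(\overline A)$ to the standard one and carries $h$ to $\overline h(z,w) := \overline{h(\overline z,\overline w)}$; its Gram matrix on $K^g$ is exactly $\overline M$. Since $\alpha_R \in \RR$, the normalization commutes with conjugation, and the rescaled form still has matrix $\overline M$, matching $\overline H$ from the second step and yielding the claimed isometry. The main obstacle is precisely this last step: rigorously identifying the analytic representation of $\overline a$ with the form of conjugated Gram matrix. This requires pushing complex conjugation through the relation $a = a_{\mathcal L}$, either via the Appell--Humbert datum $(h,\chi)$ of the polarizing line bundle or via the dual-map description $\rho_h\colon V \to V^*$; once established in the one-dimensional case (the Weierstrass-level content encoded in Proposition~\ref{lemration}), the extension to products is formal from the compatibility of $\Rh$ with direct sums.
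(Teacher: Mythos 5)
Your skeleton matches the paper's (rigidify the uniformizations by Weierstrass $\wp$-functions, use continuity of complex conjugation to get $\Lambda_{\hspace{0.05cm}\overline{.}}=\overline{\Lambda}$ as in Lemma~\ref{compconj}, then conjugate Gram matrices), and the lattice half of the argument, $L'=\overline{L}$, is essentially correct. But the proof is not complete: the step you yourself call ``the main obstacle'' --- showing that the hermitian form attached to $\overline{a}$ has Gram matrix $\overline{G}$ --- is exactly the content of the theorem beyond the lattice statement, and you only name two possible routes (Appell--Humbert data, or the dual-map description of $\rho_h$) without carrying either out. Worse, your proposed way of closing it, ``establish the one-dimensional case and extend formally by compatibility of $\Rh$ with direct sums,'' does not work as stated: a polarization $a$ on $E_1\times\cdots\times E_g$ is \emph{not} a product of polarizations on the factors (its hermitian form has off-diagonal entries), so nothing about $a$ splits along the direct sum. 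The paper's resolution is precisely designed to get around this: compare $a$ with the product polarization $a_0$, which \emph{does} conjugate to the product polarization of $\overline{A}$, and consider the morphism $a_0^{-1}\circ a\in\Hom(A,A)$. Proposition~\ref{gramherm} identifies its analytic matrix as $G_L(b)\cdot D$ with $D=\diag(N(\fa_1),\dots,N(\fa_g))$, and Proposition~\ref{matrixgal} (which rests on Proposition~\ref{lemration} applied entrywise, including to isogenies between non-isomorphic factors) shows that conjugation acts entrywise on this matrix; since $D$ has rational entries, the conjugate polarization has Gram matrix $\overline{G_L(b)}$, i.e.\ $H'=\overline{H}$. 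Either this argument, or an actually executed Appell--Humbert/antiholomorphic-descent computation in dimension $g$ (not $g=1$), is needed; as written your proof has a genuine gap at its central point.

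Two smaller inaccuracies in your setup: you cannot take the analytic models to be literal fractional ideals $\CC/\fa_i$ with $\fa_i\subseteq K$ \emph{and} have Weierstrass models over $\overline{\QQ}$, since $g_2(\fa_i),g_3(\fa_i)$ are transcendental in general; the lattices attached to $\overline{\QQ}$-models are of the form $\fa_i x_i$ with transcendental periods $x_i$, which is why the paper works with pseudo-bases $(\fa_i,x_i)$. (For complex conjugation alone this could be patched, since conjugation acts continuously on all of $\CC$, but then the phrase ``isomorphism over $\overline{\QQ}$'' must be dropped.) Also, integrality $H(L,L)\subseteq R$ does not make the Gram matrix entries lie in $R$ unless the $\fa_i$ are trivial; it only gives $H(x_i,x_j)\in(\fa_i\overline{\fa_j})^{-1}$. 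Neither of these affects the main issue above, but both would need fixing in a final write-up.
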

Let $E$ be an elliptic curve defined over $\Qbar$ with CM by $R.$ Let $\Lambda\subset \CC$ be a lattice such that $E_\CC\simeq E_\Lambda$. We will often abuse notation slightly and also denote by $E$ the base change $E_\CC$ of $E$, to avoid heavy notation such as $E_\CC(\CC)$. Recall that for $\dot{\fa}\in\Cl(R),$ the elliptic curve $\dot{\fa}\star E_\Lambda$ is defined by $E_{\fa^{-1}\Lambda}.$ By \cite[Proposition~2.4]{Sil}, there is a surjective group morphism
\begin{equation}\label{eq:F}
    F\colon \GalQK\rightarrow \Cl(R)
\end{equation}
such that the elliptic curves $E^\sigma$ and $F(\sigma)\star E$ are isomorphic. We can now state the description of the action of $\GalQK$.
\begin{theo}[Description of the action of $\GalQK$]\label{QK}
Let $(A,a)\in\PAV$ considered over $\overline{\QQ}$. Let $\Rh(A,a)=(L,H),\sigma\in\GalQK$ and let $F(\sigma)=\dot{\fa}^{-1}\in\Cl(R)$. Then there is an isometry
$$\Rh(A^\sigma,a^\sigma)\simeq \left(\fa L,\frac{1}{N(\fa)}H\right),$$
where $N(\fa)$ is the norm of $\fa.$
\end{theo}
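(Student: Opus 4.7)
My plan is to reduce the statement to the single-elliptic-curve case handled by Proposition~\ref{lemration}, and to recombine using the product-compatibility of $\Rh$ recorded at the end of Section~\ref{sect1}. First, by the remark after Theorem~\ref{equivR} (any $\CC$-isomorphism between products of CM elliptic curves descends to $\Qbar$), I choose a $\Qbar$-isomorphism $A \xrightarrow{\sim} E_1 \times \cdots \times E_g$ and transport $a$ to a polarization $\tilde{a}$ on the right-hand side. Since this isomorphism is $\Qbar$-defined it is $\sigma$-equivariant and identifies $(A^\sigma, a^\sigma)$ with $(E_1^\sigma \times \cdots \times E_g^\sigma, \tilde{a}^\sigma)$. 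The equivalence of categories sends isomorphic objects to isometric hermitian lattices, so neither $\Rh(A,a)$ nor $\Rh(A^\sigma, a^\sigma)$ is affected by this reduction. I may therefore assume $A = E_1 \times \cdots \times E_g$ with fixed Weierstrass uniformizations $\CC/\Lambda_i \xrightarrow{\sim} E_i(\CC)$, giving $\Rh(A,a) = (L,H)$ with $L = \bigoplus_i \Lambda_i \subseteq \CC^g$ and $H = \alpha_R h$, where $h$ is the hermitian form on $\CC^g$ associated to $a$.

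Next, I invoke Proposition~\ref{lemration} factor by factor. For a single CM elliptic curve $E_i$, that proposition uses the explicit Weierstrass algebraization to produce a $\Qbar$-isomorphism $E_i^\sigma \xrightarrow{\sim} E_{\fa\Lambda_i}$ together with a compatible Weierstrass uniformization $\CC/\fa\Lambda_i \xrightarrow{\sim} E_i^\sigma(\CC)$, and records the resulting effect of $\sigma$ on the hermitian data as the rescaling $h_i \mapsto h_i/N(\fa)$. Because $\T$, and hence $\Rh$, commutes with products, taking the product of these single-factor data yields
\[
A^\sigma(\CC) \;\simeq\; \CC^g\big/\fa L, \qquad \fa L \;=\; \bigoplus_i \fa \Lambda_i,
\]
so $\R(A^\sigma) = \fa L$ as $R$-submodules of $\CC^g$, and the hermitian form on $\CC^g$ associated to $a^\sigma$ is $h/N(\fa)$. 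Rescaling by $\alpha_R$ gives $\Rh(A^\sigma, a^\sigma) \simeq (\fa L,\, H/N(\fa))$, as claimed. As a consistency check, the dual of $\fa L$ with respect to $H/N(\fa)$ equals $\fa L^\#$, so the polarization degree $(\fa L^\# : \fa L) = (L^\# : L) = \deg a = \deg a^\sigma$ matches on both sides.

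The hardest step is Proposition~\ref{lemration} itself: pinning down the rescaling factor as exactly $N(\fa)$ (and not $N(\fa)^2$, a unit, or any other power) requires keeping the elliptic curves, their Galois-conjugation isogenies, and all of the associated analytic representations simultaneously defined over $\Qbar$. This is the rigidification difficulty flagged in the introduction; without it the hermitian form corresponding to $a^\sigma$ would be known only up to an ambiguous scalar. Once the single-factor computation is in place, extending it to an arbitrary $(A,a) \in \PAV$ is formal bookkeeping via the product-compatibility of $\Rh$, as sketched above.
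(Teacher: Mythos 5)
Your reduction to $A=E_1\times\cdots\times E_g$ with Weierstrass uniformizations is fine, and the single-curve input is the right starting point, but the step you describe as ``formal bookkeeping via the product-compatibility of $\Rh$'' is exactly where the content of the theorem lies, and as written it has a genuine gap. The product-compatibility of $\T$ (hence of $\Rh$) recorded in Section~\ref{sect1} applies to \emph{product} polarizations only, whereas a general $a$ on $E_1\times\cdots\times E_g$ --- in particular any indecomposable principal polarization, which is the case of interest --- has a hermitian form with nonzero cross terms $H(x_i,x_j)$, $i\neq j$. Neither Proposition~\ref{lemration} (which concerns analytic representations of isogenies; the rescaling $h\mapsto h/N(\fa)$ is Proposition~\ref{dim1}, not \ref{lemration}) nor a factor-by-factor application of it says anything about how these off-diagonal entries transform under $\sigma$. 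Concretely, the single-factor statement only gives homotheties $r_{\sigma,i}$ identifying $\Lambda_{i,\sigma}$ with $\fa\Lambda_i$; if these scalars differ from factor to factor, the transported form has its $(i,j)$ entries scaled by $r_{\sigma,i}\overline{r_{\sigma,j}}$ rather than by a single $N(r_\sigma)$, and your assertion that ``the hermitian form associated to $a^\sigma$ is $h/N(\fa)$'' does not follow. Producing one scalar $r_\sigma$ valid simultaneously for all factors is the point of Proposition~\ref{lemQK} (and Proposition~\ref{propQK}), and it requires first normalizing the models so that $\Hom_\CC(\Lambda_i,\Lambda_j)\subseteq K$ --- a normalization your argument never makes.

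Even granting a uniform $r_\sigma$, the paper does not conclude by multiplying single-factor data together. Its proof contains a reduction you omit: it constructs a polarized isogeny $f\colon(E^g,D\cdot\lambda_0)\to(A,a)$ over $\Qbar$ with $D$ diagonal with positive integer entries, obtained by pulling $a$ back to $E^g$ and diagonalizing the hermitian matrix $M=\lambda_0^{-1}\circ\lambda\in M_g(R)$ as $\trsp\overline{P}MP=D$. Only for such a diagonal polarization on a power of a single curve is the component-by-component argument legitimate (Proposition~\ref{lemlibre}); the conclusion is then transported to $(A^\sigma,a^\sigma)$ along $f^\sigma$, using that the analytic representation $S$ of $f$ has entries in $K$ and is therefore fixed by $\sigma$. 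Your duality/degree consistency check is correct but does not substitute for this. To repair the proposal you would need either to reproduce this reduction, or to argue directly on the full Gram matrix via Propositions~\ref{matrixgal} and~\ref{gramherm} combined with the uniform $r_\sigma$ of Proposition~\ref{propQK}.
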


\subsection{Positioning of the problem }
Let us recall from Section \ref{sec2.1} that for every elliptic curve with CM by $R$ and any isomorphism $\eta\colon \CC/\Lambda\rightarrow E(\CC)$ there is a unique isomorphism $[\cdot]_E\colon R\rightarrow \End(E)$ such that the following diagram commutes
\[
\xymatrix{\CC/\Lambda \ar[r]^{\alpha}\ar[d]_{\eta}&\CC/\Lambda  \ar[d]_{\eta}\\
E_\Lambda(\CC) \ar[r]^{[\alpha]} & E_\Lambda(\CC).}
\]
By \cite[Theorem~2.2]{Sil} the bracket isomorphism satisfies the following
$$\left([\alpha]_E\right)^\sigma=[\alpha^\sigma]_{E^\sigma}\text{ for all }\sigma\in\Aut(\CC),\alpha\in R.$$
This is true for any $\Lambda$ and $\eta\colon \CC/\Lambda\xrightarrow{\sim} E(\CC)$ we choose and for any lattice identification $\eta_\sigma\colon\CC/\Lambda_\sigma\xrightarrow{\sim} E^\sigma(\CC)$ of $E^\sigma$ as long as we choose the same $\eta_\sigma$ on both sides of the diagram. Of course if we do not take the same isomorphisms on both sides of the diagram, for instance we chose $\eta$ and $-\eta$, this property does not hold anymore. The main difficulty we will encounter is that we want to deal with isogenies $f\colon E\rightarrow E'$ between possibly non-isomorphic elliptic curves. If we expect diagrams like
\[
\xymatrix{\CC/\Lambda \ar[r]^{\alpha}\ar[d]_{\eta}&\CC/\Lambda'  \ar[d]^{\eta'} \\
E(\CC) \ar[r]^{f} & E'(\CC).}
\]
to have a nice behavior with respect to $\Aut(\CC)$ or $\GalQQ$ we need a kind of canonical way of describing the action of $\Aut(\CC)$ (or $\GalQQ$) on lattices, and a canonical way of choosing the isomorphisms $\eta$ (to avoid the $-\eta$ issue for instance). We will show that the $\wp$-functions of Weierstrass will do the job.

First we need to specify the isomorphisms we refer to when we consider the analytic representation of a morphism between abelian varieties. Let $A$ and $A'$ be abelian varieties over $\CC$. Consider isomorphisms $\eta\colon V/\Gamma\xrightarrow{\sim} A(\CC)$ and $\eta'\colon V'/\Lambda'\xrightarrow{\sim} A'(\CC)$. Every morphism $f\colon A\rightarrow A'$ induces a morphism $\varphi$ such that the diagram in Figure \ref{anarep} commutes.
\begin{figure}[ht]
\[
\xymatrix{V/\Gamma \ar[r]^{\varphi}\ar[d]_{\eta}&V'/\Gamma'  \ar^{\eta'}[d]\\
A(\CC) \ar[r]^{f} & A'(\CC) .}
\]
\caption{Analytic representation of an isogeny}\label{anarep}
\end{figure}
\newline
The morphism $\varphi$ of tori can be lifted to a linear map $\beta\colon V\rightarrow V'$ such that $\beta(\Gamma)\subset \Gamma'$. We call $\beta$ the analytic representation of $f$ associated with the isomorphisms $\eta$ and $\eta'$ or simply the analytic representation of $(f,\eta,\eta').$

We will use the Weierstrass $\wp$-functions and Eisenstein series as in \cite{Sil1} as a canonical way of identifying an elliptic curve over $\CC$ with a complex torus and we will study the field of definition of the induced analytic representation of isogenies between CM elliptic curves in Section \ref{sectelliptic}. Then we will extend the results for elliptic curves to product of elliptic curves, component by component, in Section \ref{sectprod} and finally prove Theorem \ref{KQ} and \ref{QK}.

\subsection{Action of $\GalQQ$ on CM elliptic curves}\label{sectelliptic}

\subsubsection{Fixing isomorphisms with Weierstrass $\wp$-functions}

By \cite[Therorem~5.1]{Sil1}, for any $A,B\in\CC$ such that $4A^3-27B^2\neq 0$ there exists a unique lattice $\Lambda\subset \CC$ such that $$A=g_2(\Lambda)=60\sum_{\omega\in\Lambda\setminus\{0\}}\frac{1}{\omega^4}\text{ and }B=g_3(\Lambda)=140\sum_{\omega\in\Lambda\setminus\{0\}}\frac{1}{\omega^6}$$
where $g_2(\Lambda)$ and $g_3(\Lambda)$ are Eisenstein series of weight $4$ and $6$ of $\Lambda,$ respectively. Let $E/\CC$ be the elliptic curve defined by the Weierstrass model $E\colon y^2=4x^3-Ax-B.$ According to \cite[Proposition~3.6]{Sil1} there is an isomorphism of Lie groups
\begin{equation}\label{philambda}
    \begin{array}{rrl}
        \phi_\Lambda\colon & \CC/\Lambda\xrightarrow{} & E_\Lambda(\CC)\subset \mathbb{P}^2(\CC)  \\
        & z\mapsto & [\wp(z,\Lambda)\colon\wp'(z,\Lambda)\colon 1]
    \end{array}
\end{equation}
with $\wp$ the Weierstrass $\wp$-functions. Moreover, by \cite[VI.5.1]{Sil1}, every elliptic curve E over $\CC$ is isomorphic to some $E_\Lambda$ via $\phi_\Lambda$.

\emph{
From now on, the notation $E_\Lambda$ means that $E(\CC)\simeq \CC/\Lambda$ with the isomorphism given by $\phi_\Lambda.$
}\\

Let $\kappa\hookrightarrow \CC$ be a field extension and $E\colon y^2=4x^3-Ax-B$ with $A,B\in \kappa$ and let $\Lambda$ be a lattice such that $g_2(\Lambda)=A$ and $g_3(\Lambda)=B.$ For every $\sigma\in\Aut(\kappa)$ we define $\Lambda_\sigma$ as the unique lattice such that $g_2(\Lambda_\sigma)=A^\sigma$ and $g_3(\Lambda_\sigma)=B^\sigma$.

By definition of $\Lambda_\sigma$ we have an isomorphism $\phi_{\Lambda_\sigma}\colon\CC/\Lambda_\sigma\rightarrow E^\sigma(\CC)$ with $E^\sigma\colon y^2=4x^3-A^\sigma x-B^\sigma.$
\begin{lem}\label{compconj}
For any lattice $\Lambda\subset\CC,$ we have $\Lambda_{\hspace{0.05cm}\overline{\cdot}}=\overline{\Lambda}.$
\end{lem}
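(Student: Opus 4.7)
The plan is to unwind the definition of $\Lambda_{\overline{.}}$ and then invoke the uniqueness statement from \cite[Theorem~5.1]{Sil1}. By construction, $\Lambda_{\overline{.}}$ is the unique lattice in $\CC$ whose Eisenstein invariants are $(g_2(\Lambda)^{\overline{.}},\,g_3(\Lambda)^{\overline{.}})=(\overline{g_2(\Lambda)},\,\overline{g_3(\Lambda)})$. So it suffices to check that the lattice $\overline{\Lambda}$ has these same invariants, i.e. $g_k(\overline{\Lambda})=\overline{g_k(\Lambda)}$ for $k=2,3$.

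For this I would compute directly from the series defining $g_2$ and $g_3$. The map $\omega\mapsto\overline{\omega}$ is a bijection $\Lambda\setminus\{0\}\to\overline{\Lambda}\setminus\{0\}$, and since complex conjugation is a continuous ring automorphism of $\CC$ it commutes with the absolutely convergent sums defining the Eisenstein series. Hence
$$g_2(\overline{\Lambda})=60\sum_{\omega\in\Lambda\setminus\{0\}}\frac{1}{\overline{\omega}^{4}}=\overline{60\sum_{\omega\in\Lambda\setminus\{0\}}\frac{1}{\omega^{4}}}=\overline{g_2(\Lambda)},$$
and the identical argument with exponent $6$ gives $g_3(\overline{\Lambda})=\overline{g_3(\Lambda)}$.

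Combined with uniqueness, this forces $\Lambda_{\overline{.}}=\overline{\Lambda}$. There is no real obstacle here beyond checking that one is entitled to push $\overline{.}$ through the sum, which is immediate from absolute convergence of the Eisenstein series of weight $\geq 3$. The step is however conceptually important for the sequel because it is precisely the continuity of complex conjugation, as highlighted in the text just before the lemma, that makes the Galois action compatible with the Weierstrass parametrization; no analogous statement would hold for a generic element of $\Aut(\CC)$.
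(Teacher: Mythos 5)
Your argument is correct and is essentially the paper's own proof: both reduce the claim to the identity $g_k(\overline{\Lambda})=\overline{g_k(\Lambda)}$ via absolute convergence of the Eisenstein series and continuity of complex conjugation, and then conclude by the uniqueness of the lattice with given invariants from \cite[Theorem~5.1]{Sil1} (which the paper leaves implicit). You have merely spelled out the term-by-term conjugation and the uniqueness step more explicitly.
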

\begin{proof}
The Eisenstein series $g_2(\Lambda)$ and $g_3(\Lambda)$ are absolutely convergent and the complex conjugation is an antilinear map so
$g_k(\overline{\Lambda})=\overline{g_k(\Lambda)}.$
\end{proof}

\subsubsection{Algebraicity of analytic representations for CM elliptic curves}

The results here are simple consequences of the chapter of \cite[Chapter II]{Sil}. We simply state them in a form which will be adapted to our formalism later. Let $E,E'$ be two elliptic curves over $\Qbar$ by Weierstrass models. By \cite[Theorem 2.2.(c)]{Sil} an isogeny $f\colon E\rightarrow E'$ is also defined over $\Qbar$. In this section we want to study the analytic representation $\alpha$ of $(f,\phi_\Lambda,\phi_{\Lambda'})$ when $E$ and $E'$ have Weierstrass models over $\Qbar$, with $\phi_\Lambda\colon\CC/\Lambda\rightarrow E(\CC)$ and $\phi_{\Lambda'}\colon \CC/\Lambda'\rightarrow E'(\CC)$ as defined in (\ref{philambda}). In particular, we wish to know:
\begin{itemize}
    \item Is $\alpha$ in $\Qbar$ ?
    \item If $\alpha\in\Qbar$, does $\alpha$ behave well with $\GalQQ$, i.e., is $\alpha^\sigma$ the analytic representation of $(f^\sigma,\phi_{\Lambda_\sigma},\phi_{\Lambda'_\sigma})$ ?
\end{itemize}
We will positively answer these questions in Proposition \ref{lemration}. This is partially addressed in \cite[Chapter II]{Sil}, but we require greater precision and control over the analytic representations than the author does for the purposes of his book. Therefore, we will elaborate on this in the current section.
\begin{lem}\label{nur}
Let $\Lambda\subset \CC$ be a lattice and $E\colon y^2=4x^3-g_2(\Lambda)x-g_3(\Lambda)$ be the elliptic curve over $\CC$ associated with $\Lambda$ by $\phi_\Lambda$. Let $r\in\CC\setminus\set{0}$, let $E'$ be the elliptic curve associated with $r\Lambda$ and let $\nu_r$ be the map defined by
$$\begin{array}{rrl}
  \nu_r  & \PP^2(\CC)\longrightarrow & \PP^2(\CC)  \\
    & [x\colon y\colon 1] \longmapsto & [\frac{1}{r^2}x\colon\frac{1}{r^3}y\colon 1], 
\end{array}$$
 Then the restriction of $\nu_r$ to $E(\CC)\subset \PP^2(\CC)$ defines an isomorphism on its image $E'(\CC)$. Moreover, $r$ is the analytic representation of $({\nu_r}_{|E(\CC)},\phi_\Lambda,\phi_{r\Lambda})$.
\end{lem}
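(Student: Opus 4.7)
The plan is to reduce everything to the well-known homogeneity (scaling) properties of the Eisenstein series and of the Weierstrass $\wp$-function, namely
\[
g_2(r\Lambda)=r^{-4}g_2(\Lambda),\qquad g_3(r\Lambda)=r^{-6}g_3(\Lambda),
\]
\[
\wp(rz,r\Lambda)=r^{-2}\wp(z,\Lambda),\qquad \wp'(rz,r\Lambda)=r^{-3}\wp'(z,\Lambda),
\]
which follow directly from the defining series (see \cite{Sil1}).

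First, I would check that $\nu_r$ genuinely lands in $E'(\CC)$. Given $(x,y)\in E(\CC)$, the point $\nu_r(x,y)=(x/r^2,y/r^3)$ satisfies
\[
\left(\tfrac{y}{r^3}\right)^2 = \tfrac{1}{r^6}\bigl(4x^3-g_2(\Lambda)x-g_3(\Lambda)\bigr)
= 4\left(\tfrac{x}{r^2}\right)^3 - \tfrac{g_2(\Lambda)}{r^4}\cdot\tfrac{x}{r^2} - \tfrac{g_3(\Lambda)}{r^6},
\]
and the scaling identities for $g_2,g_3$ identify the right-hand side with $4X^3-g_2(r\Lambda)X-g_3(r\Lambda)$ where $X=x/r^2$. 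Hence $\nu_r$ maps $E(\CC)$ into $E'(\CC)$; it is clearly a bijection on $\PP^2(\CC)$ with inverse $\nu_{1/r}$, so it restricts to an isomorphism of varieties onto its image.

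For the second assertion, I want to verify the commutativity of
\[
\xymatrix{
\CC/\Lambda \ar[r]^{z\mapsto rz}\ar[d]_{\phi_\Lambda} & \CC/(r\Lambda) \ar[d]^{\phi_{r\Lambda}} \\
E(\CC) \ar[r]^{\nu_r} & E'(\CC).
}
\]
This is a direct computation using the $\wp$-identities above:
\[
\phi_{r\Lambda}(rz)=\bigl[\wp(rz,r\Lambda):\wp'(rz,r\Lambda):1\bigr]
=\bigl[r^{-2}\wp(z,\Lambda):r^{-3}\wp'(z,\Lambda):1\bigr]
=\nu_r\bigl(\phi_\Lambda(z)\bigr).
\]
Therefore $z\mapsto rz$ is the (unique) lift of $\nu_r|_{E(\CC)}$ to $\CC\to\CC$, i.e.\ $r$ is the analytic representation of $(\nu_r|_{E(\CC)},\phi_\Lambda,\phi_{r\Lambda})$.

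There is no real obstacle here; the statement is essentially a bookkeeping result packaging the standard scaling behaviour of $\wp$ and the Eisenstein series so that it can be plugged into the later proof of Proposition \ref{lemration} and, ultimately, into the Galois analysis of Section \ref{sect3}.
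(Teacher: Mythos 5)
Your proposal is correct and follows essentially the same route as the paper: the core step is the same scaling identity for $\wp$ and $\wp'$ under $\Lambda\mapsto r\Lambda$, yielding the commutative square $\phi_{r\Lambda}(rz)=\nu_r(\phi_\Lambda(z))$ that identifies $r$ as the analytic representation. Your additional explicit check via $g_2(r\Lambda)=r^{-4}g_2(\Lambda)$, $g_3(r\Lambda)=r^{-6}g_3(\Lambda)$ that $\nu_r$ lands on $E'(\CC)$ is a harmless elaboration of what the paper leaves implicit (it follows already from the commutativity, since $\phi_{r\Lambda}$ surjects onto $E'(\CC)$).
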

\begin{proof}
For all $z\in\CC\setminus r\Lambda, r\in\CC\setminus\{0\}, \wp(z,r\Lambda)=\frac{1}{r^2}\wp(\frac{z}{r},\Lambda)$ and $\wp'(z,r\Lambda)=\frac{1}{r^3}\wp'(\frac{z}{r},\Lambda)$. Hence
\begin{align*}
    \phi_{r\Lambda}(z)&=[\wp(z,r\Lambda)\colon\wp'(z,r\Lambda)\colon 1]\\
    &=[\frac{1}{r^2}\wp\left(\frac{z}{r},\Lambda\right)\colon \frac{1}{r^3}\wp'\left(\frac{z}{r},\Lambda\right)\colon 1]\\
    &=\nu_r\left(\phi_\Lambda\left(\frac{z}{r}\right)\right).
\end{align*}
This proves that the following diagram commutes
\[
\xymatrix{\CC/\Lambda \ar[r]^{r}\ar[d]_{\phi_\Lambda}&\CC/r\Lambda  \ar^{\phi_{r\Lambda}}[d]\\
E(\CC) \ar[r]^{{\nu_r}_{|E(\CC)}} & E'(\CC)}
\]
which shows that $({\nu_r}_{|E(\CC)},\phi_\Lambda,\phi_{r\Lambda})$ has analytic representation $r.$
\end{proof}

Let $\fa$ be a fractional ideal of $R$, and let $\Lambda$ be an $R$-lattice such that $E=E_\Lambda\in\AVR$. We defined $\fa\star E$ earlier as the isomorphism class of $E_{\fa^{-1}\Lambda}$.\\

\emph{\centering From now on we will refer to $\fa\star E$ as the elliptic curve defined by the Weierstrass equation $$\fa\star E\colon y^2=4x^3-g_2(\fa^{-1}\Lambda)x-g_3(\fa^{-1}\Lambda).$$}

Let $f\colon E\rightarrow E'$ be an isogeny between elliptic curves with CM by $R$ with $E'=E_{\Lambda'}$. 
Let $\fa$ be the integral ideal of $R$ such that $\ker f=E[\fa]=\cap_{a\in\fa} \ker[a]_E.$ 
According to \cite[Proposition~1.4]{Sil}, $\fa\star E\simeq E/E[\fa]$ and we can factor $f\colon E\rightarrow E'$ and $\alpha\colon \CC/\Lambda\rightarrow\CC/\Lambda'$ as in Figure \ref{factiso} where $\pi\colon \CC/\Lambda\rightarrow \CC/\fa^{-1}\Lambda$ is the natural projections and $p\colon E\rightarrow \fa\star E$ induced by $\pi$.
\begin{center}
   \begin{figure}[ht]
$$\begin{tikzcd}
\CC/\Lambda \ar[rr,"\alpha"]\ar[dd,"\phi_{\Lambda}"]\ar[rd,"\pi"]&   &\CC/\Lambda'  \ar[dd,"\phi_{\Lambda'}"]\\
    & \CC/\fa^{-1}\Lambda\ar[ru,"\alpha"]&   \\
E(\CC) \ar[rr,"f" near start] \ar[dr,"p"]&   & E'(\CC)\\
   &(\fa\star E)(\CC)\ar[ur,"\nu_\alpha"]\ar[from=uu,"\phi_{\fa^{-1}\Lambda}" near start, crossing over]. &   
\end{tikzcd}$$
\caption{Factorization of isogenies}\label{factiso}
\end{figure} 
\end{center}
\begin{lem}\label{tensorrat}
Let $\Lambda$ be a lattice with complex multiplication such that $g_k(\Lambda)\in\Qbar$ for $k=2,3.$ Then for any fractional ideal we have $\fa\subset K,g_k(\fa^{-1}\Lambda)\in\Qbar.$
\end{lem}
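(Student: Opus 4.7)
The plan is to use $E_\Lambda/E_\Lambda[\fa]$ as a $\Qbar$-model for $\fa\star E_\Lambda$ and to control the homothety constant between the resulting $\Qbar$-rational lattice and $\fa^{-1}\Lambda$ via the invariant differential.

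A first reduction allows me to assume $\fa\subseteq R$ is integral: writing $\fa=n^{-1}\fb$ with $n\in\ZZ_{>0}$ and $\fb\subseteq R$, the weight-$2k$ transformation law $g_k(\fa^{-1}\Lambda)=n^{-2k}g_k(\fb^{-1}\Lambda)$ reduces the claim for $\fa$ to the claim for $\fb$.

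Assume now $\fa\subseteq R$. Since $E_\Lambda$ is defined over $\Qbar$, so is every endomorphism $[a]_{E_\Lambda}$ for $a\in R$; hence $E_\Lambda[\fa]=\bigcap_{a\in\fa}\ker[a]_{E_\Lambda}$ is a finite subgroup scheme of $E_\Lambda$ defined over $\Qbar$, and the quotient $E':=E_\Lambda/E_\Lambda[\fa]$ is an elliptic curve over $\Qbar$ together with a quotient isogeny $p\colon E_\Lambda\to E'$ defined over $\Qbar$. Fix any Weierstrass equation $y^2=4x^3-A'x-B'$ for $E'$ over $\Qbar$, and let $\Lambda'$ denote the associated lattice in the sense of $\phi_{\Lambda'}$, so that $g_k(\Lambda')\in\Qbar$. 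Since analytically $E'(\CC)\simeq \CC/\fa^{-1}\Lambda$, the lattice $\Lambda'$ is homothetic to $\fa^{-1}\Lambda$; writing $\Lambda'=r\,\fa^{-1}\Lambda$ with $r\in\CC^{*}$, this $r$ is precisely the analytic representation of $(p,\phi_\Lambda,\phi_{\Lambda'})$.

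The crux is to show $r\in\Qbar$. For this I would compare invariant differentials: the forms $\omega_{E_\Lambda}=dx/y$ and $\omega_{E'}=dx'/y'$ are defined over $\Qbar$, and since $p$ is a $\Qbar$-morphism, $p^{*}\omega_{E'}\in\Qbar\cdot\omega_{E_\Lambda}$. On the other hand, the $\wp$-uniformization gives $\phi_\Lambda^{*}\omega_{E_\Lambda}=dz$ and $\phi_{\Lambda'}^{*}\omega_{E'}=dz'$, and the analytic lift $z\mapsto rz$ of $p$ yields $p^{*}\omega_{E'}=r\cdot\omega_{E_\Lambda}$. Hence $r\in\Qbar$, and combined with the transformation law $g_k(\fa^{-1}\Lambda)=r^{2k}g_k(\Lambda')$ this finishes the proof. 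The main obstacle I foresee is precisely this $\Qbar$-rationality of $r$: one needs to ensure that $p$ genuinely descends to a $\Qbar$-morphism (which ultimately relies on the $R$-action being realised over $\Qbar$, i.e.\ on $[a]^{\sigma}=[a^{\sigma}]$) and that the normalisation $\phi_\Lambda^{*}(dx/y)=dz$ is compatible with the chosen Weierstrass coordinates on both sides.
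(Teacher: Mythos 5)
Your proposal is correct and follows essentially the same strategy as the paper: identify the homothety factor between the $\Qbar$-rational lattice of a $\Qbar$-model of $\fa\star E_\Lambda$ and $\fa^{-1}\Lambda$ with the coefficient of the pullback of invariant differentials (using $\phi_\Lambda^{*}(dx/y)=dz$), conclude it lies in $\Qbar$, and transfer algebraicity of $g_k$ via the weight-$2k$ scaling. The only cosmetic difference is that you reduce to integral $\fa$ and use the quotient isogeny $E_\Lambda\to E_\Lambda/E_\Lambda[\fa]$, whereas the paper takes an arbitrary $\Qbar$-isogeny to a $\Qbar$-model with kernel $E[\fb]$ and then compares $\fb^{-1}\Lambda$ with $\fa^{-1}\Lambda$ by a homothety in $K$.
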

\begin{proof}
Let $E\colon y^2=4x^3-g_2(\Lambda)x-g_3(\Lambda)$ be defined over $\Qbar$. Since all CM elliptic curves have a model over $\Qbar$ we consider $E'\colon y^2=4x^3-A'x-B'$, a model of $\fa\star E$ over $\Qbar$ and $\Lambda'$ such that $E'=E_{\Lambda'}$. Consider any isogeny $f\colon E\rightarrow E'$ over $\Qbar$ and $\alpha\in\CC$ the analytic representation of $(f,\phi_\Lambda,\phi_{\Lambda'})$. Let $r\in\Qbar$ be the coefficient of the induced map on differentials $f^* \frac{dx'}{y'}=r \frac{dx}{y}$. By the proof of \cite[Proposition~3.6.(b)]{Sil1} we have $\phi_\Lambda^*\left(\frac{dx}{y}\right)=dz$. Thus, $$(f\circ\phi_\Lambda)^*\left(\frac{dx'}{y'}\right)=\phi_\Lambda^*\circ f^*\left(\frac{dx'}{y'}\right)=\phi_\Lambda^*\left(r \frac{dx}{y}\right)=r dz$$
   and 
   $$( \phi_{\Lambda'}\circ\alpha)^*\left(\frac{dx'}{y'}\right)=\alpha dz'=\alpha dz.$$
   Since the following diagram commutes
   \[\xymatrix{
   \CC/\Lambda\ar[r]^{\alpha}\ar[d]_{\phi_\Lambda} & \CC/\Lambda'\ar[d]^{\phi_{\Lambda'}}\\
   E(\CC)\ar[r]^{f} & E'(\CC)
   }
   \]
   we have equality of the differentials $r dz=\alpha dz$. Hence $r=\alpha\in\Qbar.$ Let $\fb$ be the integral ideal such that $\ker f=E[\fb].$ By Figure \ref{factiso} we have $\Lambda'=\alpha\fb^{-1}\Lambda$ so, for $k\in\set{2,3},$ $g_k(\fb^{-1}\Lambda)=\alpha^{2k}g_k(\Lambda')\in\Qbar$. Finally, $\fb^{-1}\Lambda$ and $\fa^{-1}\Lambda$ give isomorphic elliptic curves so they must be homothetic by some $\lambda\in\Hom_\CC(\fb^{-1}\Lambda,\fa^{-1}\Lambda)=\left\{\mu\in\CC\mid\mu\fb^{-1}\Lambda\subset \fa^{-1}\Lambda\right\}\subset K$. Hence $$g_k(\fa^{-1}\Lambda)=\lambda^{-2k}g_k(\fb^{-1}\Lambda)\in\Qbar.$$
\end{proof}
\begin{prop}
\label{lemration}
Let $E\colon y^2=4x^3-Ax-B$ over $\Qbar$ and $E'\colon y^2=4x^3-A'x-B'$ over $\CC$ be elliptic curves both with CM by $R$. Let $\Lambda$ and $\Lambda'$ be lattices with $g_2(\Lambda)=A, g_3(\Lambda)=B, g_2(\Lambda')=A'$ and $g_3(\Lambda')=B'$. Let $f\colon E\rightarrow E'$ be an isogeny over $\CC$. Consider $\alpha\in\CC$ the analytic representation of $(f,\phi_\Lambda,\phi_{\Lambda'})$. Then
\begin{enumerate}
    \item $\alpha\in\overline{\QQ}$ if, and only if, $g_2(\Lambda')\in\Qbar$ and $g_3(\Lambda')\in\Qbar$.
    \item If 1. is satisfied then for every $\sigma\in \Gal(\overline{\QQ}/\QQ), \alpha^\sigma$ is the analytic representation of $(f^\sigma,\phi_{\Lambda_\sigma},\phi_{\Lambda'_\sigma})$ with $f$ identified with the induced map over $\overline{\QQ}$.
    \item For every fractional ideal $\fa$ and for all $\sigma\in\GalQQ$ we have $\left(\fa\Lambda\right)_\sigma=\fa^\sigma\Lambda_\sigma.$
    
\end{enumerate}
\end{prop}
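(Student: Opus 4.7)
My plan treats the three assertions in order, since (2) will build on (1) and (3) will build on (2).

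For (1), I will recycle the argument already used inside the proof of Lemma~\ref{tensorrat}. The key identity is $\phi_\Lambda^*(dx/y)=dz$ from \cite[Proposition~3.6.(b)]{Sil1}. Writing $f^*(dx'/y')=r\,(dx/y)$ for some $r\in\CC$, the commutative square defining $\alpha$ forces $r=\alpha$. If $g_i(\Lambda')\in\Qbar$, then $E'$ is defined over $\Qbar$ and both the pull-back of differentials and $f$ itself are $\Qbar$-morphisms, so $\alpha\in\Qbar$. Conversely, if $\alpha\in\Qbar$, then the factorization of Figure~\ref{factiso} gives $\Lambda'=\alpha\fb^{-1}\Lambda$ where $\fb$ is the integral ideal with $\ker f=E[\fb]$; Lemma~\ref{tensorrat} yields $g_k(\fb^{-1}\Lambda)\in\Qbar$, and hence $g_k(\Lambda')=\alpha^{-2k}g_k(\fb^{-1}\Lambda)\in\Qbar$.

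For (2), assuming (1), the identity $f^*(dx'/y')=\alpha\,(dx/y)$ is an equality of algebraic differentials between $\Qbar$-curves joined by a $\Qbar$-morphism. Applying $\sigma$ coefficient by coefficient converts it into $(f^\sigma)^*(d\tilde x/\tilde y)=\alpha^\sigma\,(dx/y)$ on the curves $E^\sigma=E_{\Lambda_\sigma}$ and $E'^\sigma=E_{\Lambda'_\sigma}$; this is exactly the purpose of the definition of $\Lambda_\sigma$ through the Eisenstein series, which guarantees that $\phi_{\Lambda_\sigma}$ is the parameterization attached to the Weierstrass equation $y^2=4x^3-A^\sigma x-B^\sigma$ of $E^\sigma$ (and similarly for $\Lambda'_\sigma$). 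The same computation as in the proof of Lemma~\ref{tensorrat}, now applied to $f^\sigma$, then reads off $\alpha^\sigma$ as the analytic representation of $(f^\sigma,\phi_{\Lambda_\sigma},\phi_{\Lambda'_\sigma})$.

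For (3), I first reduce to the case $\fa$ integral. Writing a general fractional ideal as $\fa=\tfrac{1}{n}\fb$ with $n\in\ZZ_{>0}$ and $\fb\subseteq R$ integral, and using the trivial identity $(cL)_\sigma=c^\sigma L_\sigma$ for $c\in\Qbar$ (which follows from $g_k(cL)=c^{-2k}g_k(L)$ and the uniqueness characterization of $\Lambda_\sigma$), it suffices to prove $(\fb\Lambda)_\sigma=\fb^\sigma\Lambda_\sigma$ for integral $\fb$. For such $\fb$ the inclusion $\fb\Lambda\subseteq\Lambda$ induces a natural isogeny $p\colon E_{\fb\Lambda}\to E_\Lambda$ whose analytic representation (with respect to $\phi_{\fb\Lambda}$ and $\phi_\Lambda$) is $1$, and whose kernel is the $\fb$-torsion $E_{\fb\Lambda}[\fb]=\Lambda/\fb\Lambda$. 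Applying (2), $p^\sigma$ has analytic representation $1$ between $\phi_{(\fb\Lambda)_\sigma}$ and $\phi_{\Lambda_\sigma}$, so it is induced by an inclusion $(\fb\Lambda)_\sigma\subseteq\Lambda_\sigma$ coming from the identity on $\CC$. On the other hand, the Galois-equivariance $[\beta]_E^\sigma=[\beta^\sigma]_{E^\sigma}$ of the bracket isomorphism gives $\ker p^\sigma=E_{(\fb\Lambda)_\sigma}[\fb^\sigma]=(\fb^\sigma)^{-1}(\fb\Lambda)_\sigma/(\fb\Lambda)_\sigma$. Equating this with $\Lambda_\sigma/(\fb\Lambda)_\sigma$ forces $\Lambda_\sigma=(\fb^\sigma)^{-1}(\fb\Lambda)_\sigma$, that is, $(\fb\Lambda)_\sigma=\fb^\sigma\Lambda_\sigma$.

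The main obstacle is (2): one must keep track rigorously of how $\sigma$ interacts with the parameterizations $\phi_\Lambda$ and $\phi_{\Lambda_\sigma}$. The whole purpose of choosing $\phi_\Lambda$ canonically via Weierstrass is to make this bookkeeping work out, with Lemma~\ref{compconj} (and its $\sigma$-analog, built directly into the definition of $\Lambda_\sigma$) guaranteeing that $\Lambda_\sigma$ is exactly the lattice whose Weierstrass data matches $E^\sigma$. Once (2) is established, (1) is essentially a repackaging of Lemma~\ref{tensorrat} and (3) reduces to the kernel computation above.
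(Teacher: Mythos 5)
Your proposal is correct, and it departs from the paper's proof in the mechanism for part (2) and in the logical ordering of (2) and (3). For (2), the paper does not conjugate the differential identity; it factors $f=\nu_\alpha\circ p$ through $\fa\star E$ as in Figure~\ref{factiso}, shows $\ker f^\sigma=\ker p^\sigma=E^\sigma[\fa^\sigma]$ (establishing (3) along the way for integral ideals), and then compares $\nu_\alpha^\sigma$ with $\nu_{\alpha_\sigma}$ on coordinates, getting $\alpha_\sigma=\pm\alpha^\sigma$ and $\alpha_\sigma=j\alpha^\sigma$ with $j^3=1$, hence $\alpha_\sigma=\alpha^\sigma$. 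Your route instead applies $\sigma$ to $f^*(dx'/y')=\alpha\,(dx/y)$ and uses $\phi_M^*(dx/y)=dz$ to read off the analytic representation; this buys a shorter argument that avoids the sign/cube-root step, at the cost of invoking (the standard) Galois equivariance of pullback of differentials, which the paper only uses implicitly inside Lemma~\ref{tensorrat}. For (3), your derivation from (2) via the quotient isogeny $p\colon E_{\fb\Lambda}\to E_\Lambda$ with analytic representation $1$, comparing $\ker p^\sigma=\Lambda_\sigma/(\fb\Lambda)_\sigma$ with $E_{(\fb\Lambda)_\sigma}[\fb^\sigma]=(\fb^\sigma)^{-1}(\fb\Lambda)_\sigma/(\fb\Lambda)_\sigma$, is the same kernel-comparison idea the paper runs with $p\colon E\to\fa\star E$, but your ordering (prove (2) first, then (3)) is the reverse of the paper's and is free of circularity since your proof of (2) nowhere uses (3); you also make explicit the reduction from fractional to integral ideals via $(cL)_\sigma=c^\sigma L_\sigma$, a point the paper passes over. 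Each approach is complete; the paper's has the small advantage of producing (3) without needing (2), yours the advantage of a cleaner identification $\alpha_\sigma=\alpha^\sigma$.
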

\begin{proof}
\begin{enumerate}
   \item Since $\Lambda'=\alpha\fa^{-1}\Lambda$ we have $g_k(\Lambda')=\alpha^{-2k}g_k(\fa^{-1}\Lambda).$ By Lemma \ref{tensorrat}, $g_k(\fa^{-1}\Lambda)\in\Qbar$ so $\alpha\in\Qbar$ if, and only if, $g_k(\Lambda')\in\Qbar$.

    \item We define $\alpha_\sigma$ the analytic representation of $(f^\sigma,\phi_{\Lambda_\sigma},\phi_{\Lambda'_\sigma}).$ We want to show that $\alpha_\sigma=\alpha^\sigma.$ Since $\Lambda_\sigma$ and $\Lambda'_\sigma$ are such that $g_i(\Lambda_\sigma)\in\overline{\QQ}$ and $g_i(\Lambda'_\sigma)\in\overline{\QQ}$, by $1.$ $\alpha_\sigma\in\overline{\QQ}$. According to \cite[Theorem~2.2]{Sil}, 
    \begin{equation}\label{eqnsigma}
            \ker( f^\sigma)=(\ker f)^\sigma=\cap_{a\in\fa} (\ker[a]_E)^\sigma=\cap_{a\in\fa} \ker[a^\sigma]_{E^\sigma}=E^\sigma[\fa^\sigma].
    \end{equation}

    Since $f^\sigma=\left({\nu_{\alpha}}_{|(\fa\star E)(\overline{\QQ})}\right)^\sigma\circ p^\sigma$ and $\left({\nu_{\alpha}}_{|(\fa\star E)(\overline{\QQ})}\right)^\sigma$ is an isomorphism, $\ker f^\sigma=\ker p^\sigma$. By definition of $E[\fa]$ we have \begin{equation}\label{eqnkernel}
        [\wp(z,\Lambda)\colon\wp'(z,\Lambda)\colon 1]\in E[\fa] \text{ if, and only if, }z\in\fa^{-1}\Lambda.
    \end{equation} Moreover, $[\wp(z,\Lambda_\sigma)\colon\wp(z,\Lambda_\sigma)\colon 1]\in\ker p^\sigma$ if, and only if, $z\in(\fa^{-1}\Lambda)_\sigma$ by definition of $p^\sigma$ but, $z\in {(\fa^\sigma)}^{-1}\Lambda_\sigma$ because $\ker p^\sigma=E^\sigma[\fa^\sigma]$. Thus, by the relations (\ref{eqnsigma}) and (\ref{eqnkernel}), we have $(\fa^{-1}\Lambda)_\sigma={(\fa^\sigma)}^{-1}\Lambda_\sigma$ (this proves the point 3 of the proposition).

    This proves that $(\fa\star E)^\sigma=\fa^\sigma\star E^\sigma.$ We also have the factorization $$f^\sigma=\left({\nu_{\alpha}}_{|(\fa\star E)(\overline{\QQ})}\right)^\sigma\circ p^\sigma={\nu_{\alpha_\sigma}}_{|(\fa^\sigma\star E^\sigma)(\overline{\QQ})}\circ p^\sigma.$$
    Since $p^\sigma$ is an isogeny it is surjective. So the maps $({\nu_\alpha}_{|\PP^2(\overline{\QQ})})^\sigma$ and ${\nu_{\alpha_\sigma}}_{|\PP^2(\overline{\QQ})}$ coincide and then $$\frac{1}{\alpha_\sigma^2}=\frac{1}{\left({\alpha^\sigma}\right)^2}\text{ so } \alpha_\sigma=\pm\alpha^\sigma\text{ and }\frac{1}{\alpha_\sigma^3}=\frac{1}{\left({\alpha^\sigma}\right)^3}\text{ so } \alpha_\sigma=j\alpha^\sigma$$
    for some $j^3=1.$ Hence $\alpha_\sigma=\alpha^\sigma.$
    
\end{enumerate}

\end{proof}

A nice consequence is that if we take $E_\Lambda$ with $g_2(\Lambda)\in\overline{\QQ}$ and $g_3(\Lambda)\in\overline{\QQ}$ then $E_{\fa^{-1}\Lambda}$ has also its Weierstrass model over $\overline{\QQ}$ and $\Hom_\CC(E_\Lambda,E_{\fa^{-1}\Lambda})=\Hom_{\Qbar}(E_\Lambda,E_{\fa^{-1}\Lambda})$ so the analytic representations of these isogenies associated with the isomorphisms $\phi_\Lambda$ and $\phi_{\fa^{-1}\Lambda}$ are in $\Hom_\CC(\Lambda,\fa^{-1}\Lambda)\subset K.$ We recall from \cite[Proposition 1.2]{Sil} that $\Cl(R)$ acts simply transitively on the elliptic curves with CM by $R$. Thus, given two elliptic curves $E$ and $E'$ over $\Qbar$ we can always find a model $E''$ of $E'$ such that the analytic representations of isogenies from $E$ to $E''$ are in $K.$

Given an automorphism $\sigma\in\GalQK$ and $F(\sigma)=\dot{\fa},$ with $F$ defined in (\ref{eq1}), by \cite[Proposition 2.4]{Sil} we know that $E^\sigma$ and $\fa\star E$ are isomorphic. This means that their corresponding lattices $\Lambda_\sigma$ and $\fa^{-1}\Lambda$ (via the $\wp$-functions) are homothetic by some constant $r_\sigma\in\CC$ which depends, a priori, on $\Lambda,\sigma$ and the choice of the representative $\fa$ of $F(\sigma)$ we chose. An immediate consequence of Proposition \ref{lemration} is that $r_\sigma$ is in $\Qbar$. The issue is that choosing another lattice $\Lambda'$ could lead to another $r_\sigma'\in\Qbar$ such that $r_\sigma'\Lambda_\sigma'=\fa^{-1}\Lambda'$. The next proposition shows that, under some condition, we can choose the same $r_\sigma$ for different lattices.

\begin{prop}[Action of $\Gal(\overline{\QQ}/K)$ on elliptic curves]\label{lemQK}
Let $\sigma\in\Gal(\overline{\QQ}/K)$ be an automorphism and $\fa$ be a fractional ideal with $F(\sigma)=\dot{\fa}\in\Cl(R)$. Consider an elliptic curve $E_\Lambda$ over $\Qbar$. Then there exists $r_\sigma\in\Qbar$ such that for all $E'(\CC)\underset{\phi_{\Lambda'}}{\simeq} \CC/\Lambda'$ such that $\Hom_\CC(\Lambda,\Lambda')\subset K$ and all isogenies $f\colon E_\Lambda\rightarrow E_{\Lambda'}$ over $\Qbar$ we have a commutative diagram
\[
\xymatrix{
E^\sigma(\CC)\ar[r]^{f^\sigma} & E'^\sigma(\CC)\\
\CC/\Lambda_\sigma  \ar[u]^{\phi_{\Lambda_\sigma}}\ar[r]^{\alpha}\ar[d]_{r_\sigma}&\CC/\Lambda'_\sigma \ar[u]_{\phi_{\Lambda'_\sigma}} \ar^{r_\sigma}[d]\\
\CC/\fa^{-1}\Lambda \ar[r]^{\alpha} & \CC/\fa^{-1}\Lambda'.}
\]
where $\alpha\in K$ is the analytic representation of $(f,\phi_{\Lambda},\phi_{\Lambda'})$.
\end{prop}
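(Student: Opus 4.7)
The plan is to construct $r_\sigma$ from the single lattice $\Lambda$ and then verify that it automatically produces the required homothety for every other $\Lambda'$ meeting the hypothesis. I would proceed in three moves.

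First, I would define $r_\sigma$ using only $E_\Lambda$. By Silverman's Proposition~2.4 (the very statement $F(\sigma)=\dot\fa$ encodes) there is an isomorphism $E^\sigma\simeq \fa\star E$ over $\overline\QQ$. Under the $\wp$-parametrizations this isomorphism is a $\nu_{r_\sigma}$-map (Lemma~\ref{nur}), so the lattices $\Lambda_\sigma$ and $\fa^{-1}\Lambda$ are homothetic: there exists $r_\sigma\in\CC^\times$ with $r_\sigma\Lambda_\sigma=\fa^{-1}\Lambda$. Since both $g_k(\Lambda_\sigma)=g_k(\Lambda)^\sigma$ and, by Lemma~\ref{tensorrat}, $g_k(\fa^{-1}\Lambda)$ lie in $\overline\QQ$, Proposition~\ref{lemration}(1) applied to this isomorphism forces $r_\sigma\in\overline\QQ$.

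Next, I would show this same scalar works for every permitted $\Lambda'$. Given any $\Lambda'$ with $\Hom_\CC(\Lambda,\Lambda')\subseteq K$ and any isogeny $f\colon E_\Lambda\to E_{\Lambda'}$ over $\overline\QQ$, the hypothesis forces the analytic representation $\alpha$ of $(f,\phi_\Lambda,\phi_{\Lambda'})$ to lie in $K$; the factorization of Figure~\ref{factiso} then yields $\Lambda'=\alpha\fb^{-1}\Lambda$ for some integral ideal $\fb\subseteq R$. The argument of Proposition~\ref{lemration}(3), combined with the transformation law $g_k(\lambda\Lambda)=\lambda^{-2k}g_k(\Lambda)$ which upgrades (3) to the scalar statement $(\lambda\Lambda)_\sigma=\lambda^\sigma\Lambda_\sigma$ for $\lambda\in\overline\QQ$, gives, since $\sigma$ fixes $\alpha\in K$ and $\fb\subseteq R$,
$$\Lambda'_\sigma=(\alpha\fb^{-1}\Lambda)_\sigma=\alpha\fb^{-1}\Lambda_\sigma.$$
Multiplying by $r_\sigma$ and using $r_\sigma\Lambda_\sigma=\fa^{-1}\Lambda$ gives
$$r_\sigma\Lambda'_\sigma=\alpha\fb^{-1}(r_\sigma\Lambda_\sigma)=\alpha\fb^{-1}\fa^{-1}\Lambda=\fa^{-1}\Lambda',$$
so multiplication by $r_\sigma$ is a well-defined isomorphism $\CC/\Lambda'_\sigma\to\CC/\fa^{-1}\Lambda'$, which provides the right-hand vertical arrow of the lower square.

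Finally, the commutativity of the diagram splits into two easy pieces. The top square is exactly the content of Proposition~\ref{lemration}(2) applied to $f$: since $\alpha\in K$ we have $\alpha^\sigma=\alpha$, and this common value is the analytic representation of $(f^\sigma,\phi_{\Lambda_\sigma},\phi_{\Lambda'_\sigma})$. The bottom square commutes because both vertical arrows are multiplication by the same scalar $r_\sigma$ and $\alpha$ is itself multiplication by a scalar, so the two composites agree on $\CC$. The main obstacle is genuinely the uniformity in $\Lambda'$: it is not at all automatic that the $r_\sigma$ cooked up from $\Lambda$ also rescales $\Lambda'_\sigma$ onto $\fa^{-1}\Lambda'$, and the whole argument hinges on exploiting $\alpha\in K$ (so $\alpha^\sigma=\alpha$) together with the multiplicativity of the $(\,\cdot\,)_\sigma$ operation on both scalars and ideals, which is what the hypothesis $\Hom_\CC(\Lambda,\Lambda')\subseteq K$ buys us.
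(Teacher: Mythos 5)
Your proposal is correct and follows essentially the same route as the paper: define $r_\sigma$ from the isomorphism $E^\sigma\simeq\fa\star E$ (so $r_\sigma\Lambda_\sigma=\fa^{-1}\Lambda$, with $r_\sigma\in\Qbar$ by Proposition~\ref{lemration}), then use $\alpha\in K$, the $\sigma$-invariance of the ideal $\fb$, and Proposition~\ref{lemration}(3) to get $\Lambda'_\sigma=\alpha\fb^{-1}\Lambda_\sigma$ and hence $r_\sigma\Lambda'_\sigma=\fa^{-1}\Lambda'$, the top square being Proposition~\ref{lemration}(2). The only (harmless) cosmetic difference is that you upgrade part~(3) to scalars via the transformation law of $g_k$, whereas the paper simply absorbs the scalar $\alpha$ into the fractional ideal.
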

\begin{proof} Since the group of fractional ideals acts transitively on CM elliptic curves over $\Qbar$ there exists an isomorphism $E^\sigma\rightarrow \fa\star E$ which induces an isomorphism on the associated tori $r_\sigma\colon\CC/\Lambda_\sigma\rightarrow \CC/\fa^{-1}\Lambda.$ 
Let $\alpha\in K$ be the analytic representation of $(f,\phi_{\Lambda},\phi_{\Lambda'})$. There exists an ideal $\fb\subset R$ such that $\alpha\Lambda=\fb^{-1}\Lambda'$ and Proposition \ref{lemration}.3 implies that $\alpha^\sigma\Lambda_\sigma=(\fb^{-1})^\sigma\Lambda'_\sigma$. Since $\alpha\in K$ and $\fb\subset K$ they are invariant by $\sigma.$ To prove the proposition, we need to show that $r_\sigma\Lambda_\sigma'=\fa^{-1}\Lambda'$.\\
On one hand, we have
\begin{align}
    \alpha\Lambda_\sigma&=\fb^{-1}\Lambda'_\sigma,\label{EQ1}\\
\intertext{on the other hand}
    \alpha\fa^{-1}\Lambda&=\fa^{-1}\fb^{-1}\Lambda'.\label{EQ2}\\
\intertext{Combined with $r_\sigma\Lambda_\sigma=\fa^{-1}\Lambda$, (\ref{EQ1}) and (\ref{EQ2}), it gives}
    r_\sigma\Lambda'_\sigma&=\fb\alpha\fa^{-1}\Lambda\nonumber\\
        &=\fb\fa^{-1}\fb^{-1}\Lambda'\nonumber\\
        &=\fa^{-1}\Lambda'\nonumber
\end{align}
which concludes the proof.
\end{proof}

\subsubsection{Galois action on polarizations of CM elliptic curves}
The canonical polarization on an elliptic curve $E$ is defined by 
$$\begin{array}{rrl}
    a_{0,E}\colon & E & \longrightarrow \widehat{E}  \\
     & P & \longmapsto [P]-[O]
\end{array}$$
with $O$ the neutral element of the group $E$. We use this isomorphism to identify any polarization $a$ of $E$ with an element of $\End(E)$ by $a_{0,E}^{-1}\circ a$.
\begin{lem}\label{polelli}
Let $\fa\subset K$ be a fractional $R$-ideal and let $x\in\CC.$ Let $E(\CC)\underset{\phi_{\Lambda}}{\simeq}\CC/\Lambda$ be the elliptic curve associated with the lattice $\Lambda=\fa x$. The principal polarization $a_{0,E}$ induces the hermitian form 
$$\begin{array}{rrl}
    h_{0,\Lambda}\colon & \CC\times\CC & \longrightarrow \CC  \\
     & (z,w) & \longmapsto \frac{z\overline{w}}{\alpha_R N(\fa)N(x)}
\end{array}$$
with $\alpha_R=\im\omega>0$ and $R=\ZZ[\omega].$
\end{lem}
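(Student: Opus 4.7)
The plan is to reduce the assertion to pinning down a single positive real constant, and then to determine it from the principality of the canonical polarization.

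In dimension one, every positive-definite hermitian form on the $\CC$-vector space $\CC$ has the shape $h(z,w) = c\,z\overline{w}$ for a unique $c \in \RR_{>0}$. So the functor $\Rh$ applied to the canonical polarization $a_{0,E}$ necessarily produces a form of this shape, and the whole content of the lemma is the scalar identity $c = (\alpha_R N(\fa) N(x))^{-1}$. To extract $c$, I would exploit the fact that $a_{0,E}$ is a principal polarization (since $a_{0,E}\colon E \to \widehat E$ is an isomorphism by construction). By the remark following Proposition \ref{propscale}, a principal polarization corresponds under $\Rh$ to a unimodular hermitian $R$-lattice; in particular the scale of $(\Lambda^{\alpha_R}, h_{0,\Lambda}^{\alpha_R})$ equals $R$.

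The key computation is then to evaluate this scale directly in terms of $\fa$ and $x$. By definition,
$$\fs(\Lambda^{\alpha_R}) \;=\; \alpha_R\, h_{0,\Lambda}(\Lambda, \Lambda) \;=\; \alpha_R\, c \,\Lambda\overline{\Lambda}.$$
Since $\Lambda = \fa x$, one has $\overline{\Lambda} = \overline{\fa}\,\overline{x}$ and therefore $\Lambda\overline{\Lambda} = (x\overline{x})(\fa\overline{\fa})$. Using the standard identities $x\overline{x} = N(x)$ for $x\in K$ and $\fa\overline{\fa} = N(\fa)R$ for a fractional ideal in an imaginary quadratic field, this gives $\Lambda\overline{\Lambda} = N(\fa)N(x)\, R$. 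Setting $\alpha_R c N(\fa) N(x) R = R$ and recalling $c > 0$ yields $c = 1/(\alpha_R N(\fa) N(x))$, which is the desired formula.

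The only subtle point is the black-box input that $\deg a_{0,E}= 1$ really translates, via the functor $\Rh$ and Proposition \ref{propscale} together with its subsequent remark, into $\fs(\Lambda^{\alpha_R}) = R$; once this is accepted, the argument is just the bookkeeping of norms above. No dependence on the choice of pseudo-basis representation $\Lambda = \fa x$ enters, because both $N(\fa)$ and $N(x)$ scale compatibly under $\fa \mapsto \lambda^{-1}\fa$, $x \mapsto \lambda x$.
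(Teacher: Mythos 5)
Your proposal is correct and follows essentially the same route as the paper: reduce to determining a single positive scalar, compute $h_{0,\Lambda}(\fa x,\fa x)=\mu N(\fa)N(x)R$, and normalize using the principality of $a_{0,E}$ — the paper phrases the normalization as $\im h_{0,\Lambda}(\Lambda,\Lambda)=\deg a_{0,E}\ZZ=\ZZ$ together with $\im R=\alpha_R\ZZ$, while you phrase it as $\fs(\Lambda^{\alpha_R})=R$ via the unimodularity remark after Proposition \ref{propscale}, which is an equivalent reading of the degree-one condition. One small slip: $x$ need not lie in $K$ (it is just a complex scalar with $\Lambda=\fa x$), so $N(x)$ should be read as $x\overline{x}=|x|^2$, which is all your computation actually uses.
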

\begin{proof}
The induced hermitian form is necessarily of the form $h_{0,\Lambda}=\mu h_0$ with $\mu\in\RR_{>0}$ and $h_0(z,w)=z\overline{w}$ because the conjugacy classes of hermitian forms on $\CC$-vector spaces are determined by their rank and signature. Moreover, since $h_{0,\Lambda}$ is a principal polarization $\im h_{0,\Lambda}(\Lambda,\Lambda)=\deg a_{0,E}\ZZ=\ZZ$. Hence
\begin{align*}
    \im h_{0,\Lambda}(\Lambda,\Lambda)&= \im h_{0,\Lambda}(\fa x,\fa x)\\
    &=\im \mu N(\fa)N(x)R\\
    &=\mu N(\fa)N(x)\im R\\
    &=\mu N(\fa)N(x)\alpha_R\ZZ=\ZZ.
\end{align*}
Hence $\mu=\frac{1}{N(\fa)N(x)\alpha_R}.$
\end{proof}
 We want to investigate first how $\GalQK$ acts on polarizations.

Let $\phi_\Lambda\colon \CC/\Lambda\rightarrow E(\CC)$ with $\Lambda=\fb x.$ We write $\Rh(E_\Lambda,a_{0,E})= (\Lambda,H_{0,\Lambda})$ with 
$H_{0,\Lambda}=h^{\alpha_R}_{0,\Lambda}$ and, by Lemma \ref{polelli}, we can write the Gram matrix of $H_{0,\Lambda}$ in the $K$ basis $x$ of $K\Lambda$, by $G_{\Lambda}(x)=\frac{1}{N(\fb)}$.
It is then clear that for all fractional ideal $\fa$, we have $\fa\Lambda=\fa\fb x$ and thus, the Gram matrix of the analytic representation of the canonical polarization of $E_{\fa\Lambda}$ in the basis $x$ is $G_{\fa\Lambda}(x)=\frac{1}{\fa\fb}=\frac{1}{N(\fa)}G_\Lambda(x)$.

Moreover, $a_{0,E}^\sigma$ is the canonical polarization $a_{0,E^\sigma}$ of $E^\sigma$ for any $E$ and $\sigma\in\Aut(\CC)$ and every polarization on elliptic curves is of the form $n a_{0,E}$ for some $n\in\NN^*$. What we did can easily be generalized for any polarization of elliptic curves.

We can conclude with this proposition.
\begin{prop}\label{dim1}
Let $E=E_\Lambda\in\AVR,a_{0,E}$ the canonical polarization on $E$ and $\Rh(E,a_{0,E})=(\Lambda,H_{0,\Lambda})$. Then for any $\sigma\in\Gal(\overline{\QQ}/K)$ and $\dot{\fa}^{-1}=F(\sigma)$ there is an isometry
$$\Rh(E^\sigma,a_{0,E}){\simeq} \left(\fa\Lambda,\frac{1}{N(\fa)} H_{0,\Lambda}\right).$$
\end{prop}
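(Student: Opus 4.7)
The plan is to bypass the Galois action entirely by replacing $(E^\sigma, a_{0,E^\sigma})$ with a more tractable polarized elliptic curve in the same isomorphism class and then computing the image of that model under $\Rh$ using Lemma~\ref{polelli}. Since $\Rh$ is a functor (part of the equivalence of categories of Theorem~\ref{equivR}) and since its target hermitian lattice is well-defined only up to isometry, it suffices to exhibit a polarized isomorphism over $\Qbar$ between $(E^\sigma, a_{0,E^\sigma})$ and $(E_{\fa\Lambda}, a_{0,E_{\fa\Lambda}})$.

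For the identification step, I would invoke the definition of $F$ and the $\Cl(R)$-action to write $E^\sigma \simeq F(\sigma)\star E = \dot{\fa}^{-1}\star E_\Lambda = E_{\fa\Lambda}$ over $\Qbar$. The key observation is that any isomorphism of elliptic curves is automatically a \emph{polarized} isomorphism between their canonical polarizations, since $a_{0,E}$ is defined intrinsically from the group law ($P\mapsto [P]-[O]$) and is transported by any group isomorphism sending identity to identity. Consequently the isomorphism above upgrades to $(E^\sigma, a_{0,E^\sigma})\simeq (E_{\fa\Lambda}, a_{0,E_{\fa\Lambda}})$ as polarized abelian varieties, and hence $\Rh(E^\sigma, a_{0,E^\sigma})\simeq \Rh(E_{\fa\Lambda}, a_{0,E_{\fa\Lambda}})$ as hermitian lattices.

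It then remains to compute the right-hand side directly. Writing $\Lambda = \fb x$ so that $\fa\Lambda = (\fa\fb)x$, I would apply Lemma~\ref{polelli} to both $\Lambda$ and $\fa\Lambda$. The only change in the resulting formula is the replacement of $N(\fb)$ by $N(\fa\fb) = N(\fa)N(\fb)$, so the induced hermitian forms on $\CC$ satisfy
$$h_{0,\fa\Lambda}(z,w) \;=\; \frac{z\overline{w}}{\alpha_R N(\fa)N(\fb)N(x)} \;=\; \frac{1}{N(\fa)}\,h_{0,\Lambda}(z,w).$$
Rescaling both forms by $\alpha_R$ (as built into the functor $\Rh$) preserves this proportionality factor, so $\Rh(E_{\fa\Lambda}, a_{0,E_{\fa\Lambda}}) = \bigl(\fa\Lambda,\tfrac{1}{N(\fa)}H_{0,\Lambda}\bigr)$, and combining with the previous step gives the claimed isometry.

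The only conceptually subtle point is the compatibility of the abstract isomorphism $E^\sigma \simeq E_{\fa\Lambda}$ with the canonical polarizations on each side; but this is automatic once one remembers that $a_{0,E}$ is a purely group-theoretic invariant of $E$, so no delicate rigidification of the kind needed later for products is required here. Everything else is bookkeeping with ideal norms driven by Lemma~\ref{polelli}.
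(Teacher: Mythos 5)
Your proposal is correct for the statement as written, and its skeleton is the same as the paper's: identify $E^\sigma$ with $\fa^{-1}\star E=E_{\fa\Lambda}$, observe that this identification respects the canonical polarizations, and compute $\Rh(E_{\fa\Lambda},a_{0,E_{\fa\Lambda}})$ from Lemma \ref{polelli} (your norm bookkeeping $N(\fa\fb)=N(\fa)N(\fb)$ and the $\alpha_R$-rescaling agree with the computation the paper carries out just before the proposition). The genuine difference is where the isomorphism comes from. You take the abstract isomorphism $E^\sigma\simeq F(\sigma)\star E$ directly from the definition of $F$, which suffices here because the conclusion is only an isometry class statement; the paper instead routes through Proposition \ref{lemQK} and Lemma \ref{nur} to realize the isomorphism as $\nu_{r_\sigma}$, whose analytic representation is a single scalar $r_\sigma$ chosen uniformly for all curves $E_{\Lambda'}$ with $\Hom_\CC(\Lambda,\Lambda')\subseteq K$. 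That extra rigidity buys nothing for Proposition \ref{dim1} itself, but it is precisely what licenses the ``component by component'' use of this proposition in Proposition \ref{lemlibre} (where the isometry is asserted to be $r_\sigma$, and later $r_\sigma I_g$ in the proof of Theorem \ref{QK}); your cheaper argument produces an isometry with no control over which one, so it could not replace the paper's proof downstream without reinstating that uniformity. Two minor points: you implicitly use $a_{0,E}^\sigma=a_{0,E^\sigma}$, a fact the paper records just before the proposition and which deserves an explicit word since the statement writes $a_{0,E}$ on the conjugated curve; and your restriction of the polarization-compatibility claim to isomorphisms is the right formulation (a degree-$n$ isogeny pulls the canonical polarization back to $n$ times the canonical one), which is in fact more careful than the paper's ``every isogeny'' phrasing, though both proofs only ever apply it to an isomorphism.
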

\begin{proof} Let $\Rh(E^\sigma,a_{0,E^\sigma})=(\Lambda_\sigma,H_{0,\Lambda_\sigma}).$ By Proposition \ref{lemQK} and Lemma \ref{nur} we have an isomorphism $$\nu_{r_\sigma}\colon E^\sigma\longrightarrow  \fa^{-1}\star E$$
and $(\fa^{-1}\star E)(\CC)\simeq \CC/\fa\Lambda$. We have $\Rh(\fa^{-1}\star E,a_{0,\fa^{-1}E})= \left(\fa\Lambda,\frac{1}{N(\fa)} H_{0,\Lambda}\right).$ Since every isogeny between elliptic curves is a polarized isogeny for the canonical polarizations, the isomorphism $\nu_{r_\sigma}$ is a polarized isogeny and then its analytic representation $r_\sigma\colon(\Lambda_\sigma,H_{0,\Lambda_\sigma})\longrightarrow\left(\fa\Lambda,\frac{1}{N(\fa)} H_{0,\Lambda}\right)$ defines an isometry on the induced hermitian lattices.
\end{proof}
 
\subsection{Product of CM elliptic curves}\label{sectprod}
Let $A=\bigoplus_{i=1}^g E_i$ be the product of $g$ elliptic curves with CM by $R$ over $\Qbar$. Let $\Lambda_i\subset \CC$ be lattices such that $\phi_{\Lambda_i}\colon \CC/\Lambda_i\rightarrow E_i(\CC)$ is the canonical isomorphism with $g_k(\Lambda_i)\in\Qbar$. Then there is a canonical isomorphism $\phi_\Gamma\colon\CC^g/\Gamma\rightarrow A(\CC)$ with $\Gamma=\bigoplus \Lambda_i$ and $\phi_\Gamma=(\phi_{\Lambda_i})_{i=1\dots g}.$ In this section we show how the results of Section \ref{sectelliptic} apply to products of elliptic curves and we conclude with the proofs of Theorem \ref{KQ} and \ref{QK}.

\subsubsection{Isogenies between products of elliptic curves}

\begin{prop}\label{matrixgal}
Let $A,A'\in\AVR$ considered over $\Qbar$ with $A= E_{\Lambda_1}\times\dots\times E_{\Lambda_g}$ and $A'= E_{\Lambda'_1}\times\dots\times E_{\Lambda_{g}'}$. Let $\Gamma=\bigoplus_i\Lambda_i$ and $\Gamma'=\bigoplus_j\Lambda'_j$. Then, for any morphism $f\colon A\rightarrow A',$ the matrix $M_f$ of the analytic representation of $(f,\phi_\Gamma,\phi_{\Gamma'})$ in the canonical basis of $\CC\Gamma$ and $\CC\Gamma'$ has coefficients in $\overline{\QQ}$ and for any $\sigma\in\Gal(\overline{\QQ}/\QQ), M_{f^\sigma}=M_f^\sigma.$
\end{prop}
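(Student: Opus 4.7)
The plan is to reduce the statement to the single-elliptic-curve case treated in Proposition \ref{lemration}, via the functoriality of products. First I would decompose $f$ into its components: writing $\iota_i\colon E_{\Lambda_i}\hookrightarrow A$ for the canonical inclusions and $\pi_j\colon A'\twoheadrightarrow E_{\Lambda'_j}$ for the canonical projections (all defined over $\QQ$), I set $f_{ji}=\pi_j\circ f\circ \iota_i\colon E_{\Lambda_i}\to E_{\Lambda'_j}$. Since the functor $\T$ (and hence the formation of analytic representations) commutes with products, as recalled at the end of Section~\ref{sect1}, the canonical isomorphisms $\phi_\Gamma=(\phi_{\Lambda_i})_i$ and $\phi_{\Gamma'}=(\phi_{\Lambda'_j})_j$ identify $M_f$ with the $g'\times g$ matrix whose $(j,i)$ entry $\alpha_{ji}$ is the analytic representation of $(f_{ji},\phi_{\Lambda_i},\phi_{\Lambda'_j})$.

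Next I would argue that each $\alpha_{ji}$ lies in $\Qbar$. Since $E_{\Lambda_i}$ and $E_{\Lambda'_j}$ both have CM by $R$, any morphism between them is either zero or an isogeny; in the zero case $\alpha_{ji}=0\in\Qbar$ trivially, and in the isogeny case Proposition~\ref{lemration}.1 applies, because by the hypothesis $g_k(\Lambda_i),g_k(\Lambda'_j)\in\Qbar$ (these are the Weierstrass coefficients of the $\Qbar$-models). Hence $M_f$ has entries in $\Qbar$.

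For the Galois equivariance, I would use that the projections $\pi_j$ and inclusions $\iota_i$ are defined over $\QQ$, so they are fixed by $\sigma\in\GalQQ$. This gives $(f^\sigma)_{ji}=(f_{ji})^\sigma$, and it remains to compare the analytic representation of $((f_{ji})^\sigma,\phi_{(\Lambda_i)_\sigma},\phi_{(\Lambda'_j)_\sigma})$ with $\alpha_{ji}^\sigma$. This is exactly the content of Proposition~\ref{lemration}.2 (again the zero components are trivial). Assembling this componentwise yields $M_{f^\sigma}=(M_f)^\sigma$.

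The only point that requires a little care is the bookkeeping around the canonical bases: one must check that the block structure of $M_f$ coming from the product decomposition is genuinely compatible with the identification $\phi_\Gamma=(\phi_{\Lambda_i})_i$ and its $\sigma$-transform $\phi_{\Gamma_\sigma}=(\phi_{(\Lambda_i)_\sigma})_i$, so that applying $\sigma$ entrywise to $M_f$ really produces the matrix of $f^\sigma$ in the \emph{new} canonical basis. This is a direct consequence of the compatibility of $\T$ with products together with the naturality of $\phi_\Lambda$ under Galois established in Proposition~\ref{lemration}; no further ingredient is needed.
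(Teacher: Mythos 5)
Your proposal is correct and follows essentially the same route as the paper: decompose $f$ componentwise via the canonical inclusions and projections, identify the entries of $M_f$ with the analytic representations of the maps $\pi_j\circ f\circ\iota_i$, and apply Proposition~\ref{lemration} entrywise for both the $\Qbar$-rationality and the Galois equivariance. Your explicit handling of the zero components is a small refinement the paper leaves implicit, but it changes nothing in substance.
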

\begin{proof}
Consider the following diagram for all $k,l$
\[
\xymatrix{
\CC/\Lambda_k \ar@{^{(}->}[r]^{j_k}\ar[d]^{\phi_{\Lambda_k}} & \CC^g/\Gamma \ar[r]^{M_f}\ar[d]^{\phi_{\Gamma}}& \CC^g/\Gamma' \ar@{->>}[r]^{p'_l}\ar[d]^{\phi_{\Gamma'}} & \CC/\Lambda'_l\ar[d]^{\phi_{\Lambda_l'}}\\
E_{\Lambda_k}(\CC) \ar@{^{(}->}[r]^{\iota_k} & A(\CC) \ar[r]^{f} & A'(\CC) \ar@{->>}[r]^{\pi'_l} & E_{\Lambda'_l}(\CC)
}
\]
with $j_k,\iota_k$ and $p'_l,\pi_l'$ the $k$-th component inclusion and $l$-th component projection respectively.

Through those morphisms, we can see the $k,l$ coefficient $m_{l,k}$ of $M_f$ as a map
$$m_{l,k}\colon \CC/\Lambda_k\rightarrow \CC/\Lambda'_l$$ which is in $\overline{\QQ}$ and, by Proposition \ref{lemration}, $m_{l,k}^\sigma$ is the analytic representation of $((p_l'\circ f\circ\iota_k)^\sigma,\phi_{\Lambda_k},\phi_{\Lambda_l})$.
\end{proof}
Let $(A,a)\in \PAV$ with $A\simeq E_{\Lambda_1}\times\dots\times E_{\Lambda_g}$. Let $\Gamma=\bigoplus_i\Lambda_i$. We denote by $h$ the hermitian form on $\CC^g$ and $\rho_h\in\Hom_\CC(\Gamma,\widehat{\Gamma})$ the map induced by the polarization $a$. Consider $a_{0}$ the product polarization of the canonical polarizations on each elliptic curve $a_{0,E_i}\colon E_i\rightarrow \widehat{E_i}$. It is an isomorphism
$$a_{0}\colon \bigoplus_{i=1}^g E_i\rightarrow \bigoplus_{i=1}^g \widehat{E_i}$$
which induces
$$\rho_{h_0}=( \rho_{h_{0,\Lambda_i}})\colon \CC^g/\bigoplus_i \Lambda_i\rightarrow (\CC^g)^*/\bigoplus_i\widehat{\Lambda_i} $$
with each $\rho_{h_{0,\Lambda_i}}$ induced by the canonical polarization on $E_{\Lambda_i}$.
We can now consider the analytic representation $\rho$ of $(a_{0}^{-1}\circ a,\phi_\Gamma,\phi_\Gamma)$. By definition, the diagram of Figure \ref{dual} commutes.
\begin{center}
   \begin{figure}[ht]
$$\begin{tikzcd}
 &\text{\footnotesize $ \CC^g/\Gamma$} \ar[rd, "\rho_{h_0}"]\ar[dd,"\text{\tiny $\phi_\Gamma$}"near start] & \\
\CC^g/\Gamma \ar[rr,"\rho_h" near start, crossing over]\ar[dd,"\phi_{\Gamma}"']\ar[ru,"\rho"]&   &(\CC^g)^*/\widehat{\Gamma}  \ar[dd] \\
    & \text{\footnotesize $A(\CC)$} \ar[rd,"a_0"]&   \\
A(\CC)\ar[ru,"a_0^{-1}\circ a" near end] \ar[rr,"a"] &   & \widehat{A}(\CC) &
\end{tikzcd}$$
\caption{Analytic representation of polarizations}\label{dual}
\end{figure} 
\end{center}
Recall that we denoted $\alpha_R=\im\omega>0$ with $R=\ZZ[\omega].$ Let $\Lambda_i=\fa_i x_i$ then $\Gamma=\bigoplus \fa_i x_i$ and, with Lemma \ref{integ} we can show that $$\widehat{\Gamma}=\left\{\ell \in (\CC^g)^*\mid \im \ell(\Gamma)\subset \ZZ\right\}=\left\{\ell \in (\CC^g)^*\mid \alpha_R\ell(\Gamma)\subset R\right\}=\bigoplus \frac{1}{\alpha_R}\overline{\fa_j}^{-1}x_j^*$$
with $x_j^*\in(\CC^g)^*$ defined by $x_j^*(x_i)=\delta_{i,j}$ with $\delta_{i,j}=1$ for $i=j$ and $0$ otherwise.
\begin{prop}\label{gramherm}
With the notation above let $b=(x_i)_{i=1,\dots,g}$. Let $M_{b,b}(\rho)$ be the matrix of $\rho$ in the basis $b$ and $G_{\Gamma}(b)=(h^{\alpha_R}(x_i,x_j))_{i,j}$ the Gram matrix of the hermitian form $h^{\alpha_R}=\alpha_R h$ in the basis $b$. Then $$M_{b,b}(\rho)= G_{\Gamma}(b)\cdot D$$
with $D=\diag(N(\fa_1),\dots,N(\fa_g))$, the diagonal matrix with coefficients $d_{i,i}=N(\fa_i).$
\end{prop}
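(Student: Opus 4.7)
The plan is to deduce the formula directly from the commutative diagram of Figure \ref{dual}, which factors $\rho_h = \rho_{h_0}\circ\rho$, together with explicit computations of the matrices of $\rho_h$ and $\rho_{h_0}$ with respect to the basis $b$ of $\CC^g$ and the antilinear dual basis $(x_j^*)$ of $(\CC^g)^*$. Once these two matrices are written down, the matrix of $\rho$ is obtained by a single matrix product.

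First, I would compute the matrix of $\rho_h$. By definition $\rho_h(v)=h(v,\cdot)$, so evaluating at a basis vector gives the antilinear form $w\mapsto h(x_i,w)$. Since $x_j^*$ is antilinear with $x_j^*(x_k)=\delta_{jk}$, matching coefficients yields $\rho_h(x_i)=\sum_j h(x_i,x_j)x_j^*$. Using the relation $G_\Gamma(b)_{ij}=h^{\alpha_R}(x_i,x_j)=\alpha_R\,h(x_i,x_j)$, this shows that the matrix of $\rho_h$ in the bases $b$ and $(x_j^*)$ equals $\tfrac{1}{\alpha_R}G_\Gamma(b)$.

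Next, I would compute the matrix of $\rho_{h_0}$ using Lemma \ref{polelli} componentwise. Since $h_0=\bigoplus_i h_{0,\Lambda_i}$ splits along the factors, the matrix is diagonal. Applying Lemma \ref{polelli} to each $\Lambda_i=\fa_i x_i$ gives $h_{0,\Lambda_i}(x_i,x_i)=\frac{x_i\overline{x_i}}{\alpha_R N(\fa_i)N(x_i)}=\frac{1}{\alpha_R N(\fa_i)}$, so $\rho_{h_0}$ has matrix $\tfrac{1}{\alpha_R}D^{-1}$ in the same bases.

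Combining these through $\rho_h=\rho_{h_0}\circ\rho$, one reads off $M_{b,b}(\rho)=\tfrac{1}{\alpha_R}G_\Gamma(b)\cdot\alpha_R D=G_\Gamma(b)\cdot D$, which is the claim. The only real subtlety is bookkeeping: one has to choose and fix the matrix convention and track the antilinearity of the dual pairing so that the order of multiplication and the factors of $\alpha_R$ and $N(\fa_i)$ come out correctly. Once these conventions are pinned down (as already set up by the choice of $(x_j^*)$ just before the statement), the computation is routine.
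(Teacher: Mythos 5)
Your argument is correct and is essentially the paper's own proof: both use the factorization $\rho=\rho_{h_0}^{-1}\circ\rho_h$ from Figure \ref{dual}, compute $\rho_{h_0}(x_i)=\frac{1}{\alpha_R N(\fa_i)}x_i^*$ via Lemma \ref{polelli}, and combine to get the entries $N(\fa_j)\,\alpha_R\,h(x_i,x_j)$. The only difference is cosmetic (you phrase it as a product of two matrices, the paper computes the coefficients of $\rho=\rho_{h_0}^{-1}\circ\rho_h$ entrywise), and your convention for $M_{b,b}(\rho)$ matches the paper's $\rho_{ij}=(\rho(x_i))_j$, so the order $G_\Gamma(b)\cdot D$ comes out right.
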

\begin{proof}
By definition of $\rho_{h_{0,\Lambda_i}}\in \Hom_\CC(\Lambda_i,\widehat{\Lambda_i})$ and Lemma \ref{polelli}
$$\begin{array}{rrl}
 \rho_{h_{0,\Lambda_i}}\colon & \CC/\Lambda_i & \longrightarrow  \CC^*/\widehat{\Lambda_i}  \\
     & z &\longmapsto \left(\ell_z\colon w\mapsto\frac{z\overline{w}}{\alpha_R N(\fa)N(x_i)}\right).
\end{array}$$
Hence $\rho_{h_0}(x_i)=\ell_{x_i}=\frac{1}{N(\fa_i)\alpha_R}x_i^*$ so $\rho_{h_0}^{-1}(x_i^*)=N(\fa_i)\alpha_R x_i$. Thus, for any $\ell=\sum_i u_i x_i^*\in(\CC^g)^*,$
\begin{equation}
     \rho_{h_0}^{-1}(\ell)=\sum_{i=1}^g u_i  N(\fa_i)\alpha_R x_i \label{eq2}
\end{equation} and then its $j$-th component is
\begin{equation}
    (\rho_{h_0}^{-1}(\ell))_j= u_j N(\fa_j)\alpha_R = \alpha_R \ell(x_j)\label{eq3}
\end{equation}
Now, since $\rho=\rho_{h_0}^{-1}\circ \rho_h$ we have
\begin{align*}
    \rho_{ij}&=(\rho(x_i))_j\\
    &=(\rho_{h_0}^{-1}\circ \rho_h(x_i))_j\\
    &=N(\fa_j)\alpha_R \rho_h(x_i)(x_j) \text{ by (\ref{eq2}) and (\ref{eq3}) }\\
    &=N(\fa_j)\alpha_R h(x_i,x_j)\\
    &=N(\fa_j)h^{\alpha_R}(x_i,x_j)\\
    &=(G_\Gamma(b)\cdot D)_{i,j}.
\end{align*}
\end{proof}

\begin{prop}\label{propQK}
Let $A,A'\in\AVR$ considered over $\Qbar$ with $A = E_{\Lambda_1}\times\dots\times E_{\Lambda_g}$ and $A'= E_{\Lambda'_1}\times\dots\times E_{\Lambda_{g'}}$. Let $\Gamma=\bigoplus_i\Lambda_i$ and $\Gamma'=\bigoplus_j\Lambda'_j$. Assume that for all $i,j, \Hom_\CC(\Lambda_i,\Lambda'_j)\subset K.$ Then for any $\sigma\in\Gal(\overline{\QQ}/K)$ and $\fa$ a fractional $R$-ideal such that $F(\sigma)=\dot{\fa}\in\Cl(R)$ there exists $r_\sigma\in\Qbar$ such that for any $f\colon A\rightarrow A'$ with analytic representation $M$ there is a commutative diagram
\[
\xymatrix{
A^\sigma(\CC)\ar[r]^{f^\sigma} & A'^\sigma(\CC)\\
\CC^g/\Gamma_\sigma  \ar[u]^{\phi_{\Gamma}}\ar[r]^{M}\ar[d]_{r_\sigma I_{g}}&\CC^{g'}/\Gamma'_\sigma \ar[u]_{\phi_{\Gamma'_\sigma}} \ar^{r_\sigma I_{g'}}[d]\\
\CC^g/\fa^{-1}\Gamma \ar[r]^{M} & \CC/\fa^{-1}\Gamma'.}
\]
\end{prop}
\begin{proof}
Apply Lemma \ref{lemQK} and following the same steps as in the proof of Proposition \ref{matrixgal}.
\end{proof}
\begin{prop}\label{lemlibre}
Let $E_1,\dots,E_g$ be elliptic curves over $\overline{\QQ}$ with CM by $R$ and the isomorphisms $\phi_{\Lambda_i}\colon \CC/\Lambda_i\rightarrow E_i(\CC)$ with $\Hom_\CC(\Lambda_i,\Lambda_j)\subset K$. Let $a_{0,E_i}$ be the canonical polarization on $E_i$, let $a_0$ be the product polarization on $E_1\times\dots\times E_g$ and $\Rh(\bigoplus_i E_i,a_0)=(\bigoplus_i\Lambda_i,H_0)$. Then for any $\sigma\in\Gal(\overline{\QQ}/K)$ and $\dot{\fa}^{-1}=F(\sigma)$ there is an isometry
$$\Rh\left(\bigoplus_i E_i^\sigma,a_0^\sigma\right){\simeq} \left(\fa\bigoplus_i\Lambda_i,\frac{1}{N(\fa)}H_0\right).$$
\end{prop}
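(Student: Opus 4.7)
The plan is to reduce the product statement to the one-factor case (Proposition~\ref{dim1}), using the coherent scalar $r_\sigma$ furnished by Proposition~\ref{propQK} so that a single diagonal matrix realises the required isomorphism on the whole product at once.

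First, I would apply Proposition~\ref{propQK} with $A=A'=\bigoplus_i E_i$ and the fractional ideal $\fb:=\fa^{-1}$ (so that $\dot{\fb}=F(\sigma)$, matching the convention of Proposition~\ref{propQK}, whereas here $F(\sigma)=\dot{\fa}^{-1}$). The hypothesis $\Hom_\CC(\Lambda_i,\Lambda_j)\subseteq K$ is exactly what is required. This produces a single $r_\sigma\in\Qbar$ for which the diagonal map $r_\sigma I_g$ is the analytic representation, with respect to $\phi_{\Gamma_\sigma}$ and $\phi_{\fa\Gamma}$, of an isomorphism of tori
$$\CC^g/\bigoplus_i\Lambda_{i,\sigma}\xrightarrow{\sim}\CC^g/\fa\bigoplus_i\Lambda_i.$$
In particular, multiplication by $r_\sigma$ sends $\Lambda_{i,\sigma}$ onto $\fa\Lambda_i$ for each $i$, so $r_\sigma I_g$ yields an $R$-module isomorphism $\bigoplus_i\Lambda_{i,\sigma}\xrightarrow{\sim}\fa\bigoplus_i\Lambda_i$.

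Next, I would identify the hermitian forms on both sides. Because $\Rh$ commutes with products (as $\PT$ and $\R$ do) and $a_0$ is the product of the canonical polarizations, $H_0=\bigoplus_i H_{0,\Lambda_i}$ is the orthogonal sum; the same reasoning applied to $a_0^\sigma=\bigoplus_i a_{0,E_i^\sigma}$ gives
$$\Rh\!\left(\bigoplus_i E_i^\sigma,a_0^\sigma\right)=\left(\bigoplus_i\Lambda_{i,\sigma},\bigoplus_i H_{0,\Lambda_{i,\sigma}}\right).$$
Applying Proposition~\ref{dim1} to each $E_i$ with the very same $r_\sigma$ as above (the one which, by Proposition~\ref{propQK}, works simultaneously for all the factors), multiplication by $r_\sigma$ induces for each $i$ an isometry $(\Lambda_{i,\sigma},H_{0,\Lambda_{i,\sigma}})\simeq(\fa\Lambda_i,\tfrac{1}{N(\fa)}H_{0,\Lambda_i})$. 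Taking the orthogonal sum of these componentwise isometries then produces the claimed isometry $r_\sigma I_g\colon\bigl(\bigoplus_i\Lambda_{i,\sigma},\bigoplus_i H_{0,\Lambda_{i,\sigma}}\bigr)\simeq\bigl(\fa\bigoplus_i\Lambda_i,\tfrac{1}{N(\fa)}H_0\bigr)$.

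The main obstacle is not computational but a coherence issue: a priori, a componentwise use of Proposition~\ref{dim1} would yield distinct scalars $r_{\sigma,i}$, and there is no reason for them to agree. The role of the $K$-rationality hypothesis $\Hom_\CC(\Lambda_i,\Lambda_j)\subseteq K$ is exactly to make a single global $r_\sigma$ available, as already encapsulated in Proposition~\ref{propQK} (itself built on Lemma~\ref{lemQK}). Once that coherence is secured, the statement on hermitian forms reduces by orthogonal summation to the one-dimensional case, and the proposition follows.
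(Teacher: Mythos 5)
Your proof is correct and is essentially the paper's own argument: the paper disposes of this proposition with the single line ``this is Proposition~\ref{dim1} applied component by component,'' relying implicitly on the coherence of the scalar $r_\sigma$ coming from Lemma~\ref{lemQK}/Proposition~\ref{propQK}, which you make explicit. Your expansion of the coherence point (a single $r_\sigma$ valid for all factors thanks to $\Hom_\CC(\Lambda_i,\Lambda_j)\subseteq K$) is exactly the content the paper leaves implicit, so there is nothing to change.
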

\begin{proof}
Apply Lemma \ref{dim1} component by component.
\end{proof}

\subsubsection{Proof of Theorem \ref{KQ} and \ref{QK}}
We have now the necessary tools to prove both Theorem \ref{KQ} and \ref{QK}. Since both theorems aim to describe the isometry class of $\Rh(A^\sigma,a^\sigma)$, for $(A,a)\in\PAV$ and $\sigma\in \GalQK$ or $\sigma\in \GalKQ$, it is enough to show that there is such an isometry for a particular object in the isomorphism class of $(A^\sigma,a^\sigma).$ By definition of $\PAV$, each isomorphism class of polarized abelian variety contains an element of the form $(E_1\times \dots\times E_g,a)$ so we will show the isometry for this particular case.

Let $(L,H)$ be an integral rank $g$ hermitian $R$-lattice. Fixing a basis $b$ of $KL$ gives an isomorphism $KL\simeq K^g$ and pushing forward $H$ on $K^g$ gives an isometry $(KL,H)\rightarrow (K^g,H)$. We identify $(L,H)$ with its image in $(K^g,H).$ The hermitian form $H$ is determined by the Gram matrix of $b$ given by $G=G(b)=(H(v,w))_{v,w\in b}\in M_{g,g}(K).$ Indeed, with $x,y\in K^g$, we have $$H(x,y)=\trsp x G \overline{y}$$ and then, by definition,
$$\overline{H}(x,y)=\overline{H(\overline{x},\overline{y})}=\overline{\trsp\overline{x}G \overline{\overline{y}}}=\trsp x \overline{G} \overline{y}.$$
Hence $(\overline{L},\overline{H})$ has Gram matrix $\overline{G}$.
\begin{proof}[Proof of Theorem~\ref{KQ}] Let $(A,a)\in\PAV$ considered over $\Qbar$ with $A= E_{\Lambda_1}\times\dots\times E_{\Lambda_g}$ with $\Hom_\CC(\Lambda_i,\Lambda_j)\subset K$. Let $\Rh(A,a)=(L,H)$ and let $\Rh(\overline{A},\overline{a})=(L',H')$. We write $\Lambda_i=\fa_i x_i$ and $b=(x_1,\dots,x_g)$ such that $(\fa_i,x_i)$ is a pseudo-basis of $L=\bigoplus_i\Lambda_i$. By Lemma \ref{compconj}, $L'=\overline{L}$. Moreover, by Proposition \ref{gramherm}, the matrix $M(b,b)$ of the analytic representation $\rho$ of $(a_0^{-1}\circ a,\phi_L,\phi_L)$ satisfies $M_{b,b}(\rho)=G_L(b)\cdot D$ with $D$ the diagonal matrix $\diag\parent{N(\fa_1),\dots,N(\fa_g)}$. By Proposition \ref{matrixgal} and by applying the complex conjugation to the diagram

$$\xymatrix{\CC^g/L\ar[d]^{\phi_L}\ar[r]^{\rho} & \CC^g/L\ar[d]^{\phi_L}\\
A(\CC)\ar[r]^{a_0^{-1}\circ a}& A(\CC)} $$
the analytic representation of $(\overline{a_0}^{-1}\circ\overline{a},\phi_{\overline{L}},\phi_{\overline{L}})$ is $\overline{M_{b,b}(\rho)}=\overline{G_{L}(b)}$. Hence $(\overline{L},H')$ has Gram matrix $\overline{G_L(b)}$ so $H'=\overline{H}.$
\end{proof}

\begin{proof}[Proof of Theorem~\ref{QK}]
Let $(A,a)\in\PAV$ considered over $\Qbar$ such that $A=E_{\Lambda_1}\times\dots\times E_{\Lambda_g}$ with $\End(\Lambda_i,\Lambda_j)\subset K$ for all $i,j$. We will write $E_i=E_{\Lambda_i}$ for simplicity. Let $\Gamma=\bigoplus \Lambda_i$ and $\Rh(A,a)= (L,H), \sigma\in\Gal(\overline{\QQ}/K)$ with $L=\Gamma$ and $\dot{\fa}^{-1}=F(\sigma)$.
\begin{description}
\item[First step : find a polarized isogeny $(E^n, D\cdot\lambda_0)\rightarrow (A,a) $ with $D$ diagonal with integer entries.] Let $\Lambda=\Lambda_1$ and $E=E_{1}$ and $\lambda_0$ the product polarization on $E^g.$ There exist $\alpha_i\in K$ such that for all $i,\alpha_i\Lambda\subset \Lambda_i$. This induces an isogeny $q\colon E^g\rightarrow A$. We denote by $Q=\diag(\alpha_1,\dots,\alpha_g)$ the analytic representation of $(q,\phi_{\Lambda^g},\phi_{\Gamma})$. We consider $\lambda\colon E^g\rightarrow \widehat{E}^g$ the pullback polarization of $a$ on $E^g$, i.e., the unique polarization on $E^g$ that makes $q$ a polarized isogeny. We consider $M=\lambda_0^{-1}\circ\lambda\in\End(E^g)\simeq M_g(R)$, and since $\lambda=\widehat{\lambda}$ the matrix $M$ is a hermitian matrix, i.e., $\trsp\overline{M}=M.$ Now consider a matrix $P\in M_g(R)$ such that $\trsp\overline{P}M P = D$ with $D$ a diagonal matrix. Since $M$ is positive definite and hermitian, so does $D$. Since $R\cap \RR^+=\NN$ the matrix $D$ has integer entries that are positive. So we have a polarized isogeny $f=q\circ P\colon (E^g,D\cdot\lambda_0)\rightarrow (A,a)$.
\[
\xymatrix{
E^g \ar[r]^{P} \ar[d]^{D}& E^g \ar[r]^q \ar[d]^M & \bigoplus E_i \ar[d]^{a}\\
E^g  & E^g \ar[l]^{\trsp \overline{P}} & \ar[l]^{\widehat{q}}\widehat{\bigoplus E_i}\simeq\bigoplus E_i
}
\]

We will denote by $S=QP$ the analytic representation of $(f,\phi_{\Lambda^g},\phi_\Gamma).$ Since $f$ is a polarized isogeny
\begin{equation}\label{eqS}
    S\colon (K\Lambda^g,D)\rightarrow (KL,H)
\end{equation}
is an isometry.

\item[Second step : conclude.] By Proposition \ref{lemlibre}, $\Rh\left((E^g)^\sigma,D^\sigma\right)\underset{r_\sigma}{\simeq} \left(\fa \Lambda^g,\frac{1}{N(\fa)}D\right).$ We now consider $\Rh(A^\sigma,a^\sigma)=(L_\sigma,H_\sigma)$ and $\iota_\sigma=\Rh(f^\sigma)$. By Proposition \ref{lemQK} we have $r_\sigma L_\sigma=\fa L$ and there is a commutative diagram
\[
\xymatrix{
(E^g)^\sigma(\CC)\ar[r]^{f^\sigma} & A^\sigma(\CC) \\
 \CC^{g}/\Lambda_\sigma^g \ar[u]^{\phi_{P_\sigma}}\ar[r]^{S}\ar[d]_{r_\sigma I_{g}}&\CC^g/L_\sigma \ar[u]_{\phi_{L_\sigma}} \ar^{r_\sigma I_{g}}[d]\\
\CC/\fa \Lambda^g \ar[r]^{S} & \CC^g/\fa L.}
\]
Moreover, the maps
\begin{equation*}
    r_\sigma I_g\colon (KL_\sigma,H_\sigma)\rightarrow \left(K\fa L,H'=\frac{1}{N(r_\sigma)}H_\sigma\right)\text{ and }S\colon \left(K\fa \Lambda^g,\frac{1}{N(\fa)}D\right)\rightarrow \left(K\fa L,H'\right)
\end{equation*}
are isometries. Hence we have $H'=\frac{1}{N(\fa)}\trsp\overline{S^{-1}}DS^{-1}=\frac{1}{N(\fa)}H$.
Hence $$\Rh(A^\sigma,a^\sigma)= (L_\sigma,H_\sigma)\simeq \left(\fa L,\frac{1}{N(\fa)}H\right).$$
This concludes the proof of Theorem \ref{QK}.
\end{description}
\end{proof}

\section{Application to the field of moduli of varieties in $\PAV$}\label{sect4}

\subsection{General results on the field of moduli of principally polarized abelian varieties in $\PAV$}\label{VAPPandCurves}
We recall that the \textit{field of moduli} of a polarized abelian variety $(A,a)$ over $\Qbar$ is the field fixed by the subgroup
$$\left\{\sigma\in\GalQQ\mid (A^\sigma,a^\sigma)\simeq (A,a)\right\}.$$
In the same way we define the field of moduli of a curve $C$ over $\overline{\QQ}$ by the fixed field of $\left\{\sigma\in\GalQQ\mid C^\sigma\simeq C\right\}.$

The Abel-Jacobi map $C\mapsto (\Jac(C),j)$ which associates a curves to its polarized Jacobian variety induces a morphism
$$\begin{array}{rrl}
    [\Jac]\colon & M_g \longrightarrow & A_g \\
     & [C] \longmapsto & [\Jac(C),j] 
\end{array}$$
between the moduli space of smooth absolutely irreducible projective genus $g$ curves and the moduli space of principally polarized abelian varieties of dimension $g$. By \cite[Chapter~VII,Corollary~12.2]{SilCor}, $[\Jac]$ is injective so $C$ and $(\Jac(C),j)$ have the same field of moduli.

For $g=2$ and $g=3$ the moduli spaces $M_g$ and $A_g$ have the same dimension and are irreducible. Thus, the Jacobian map is dominant if the field is algebraically closed. More specifically, every indecomposable principally polarized abelian variety is the Jacobian of a curve (see \cite{Hoy}).

We give a necessary condition on the class group of $R$ and on $\Rh(A,a)$ for an abelian variety $(A,a)\in\PAV$ to have field of moduli $\QQ.$
\begin{prop}\label{exponent}
Let $(A,a)\in\PAV$ considered over $\Qbar$ and $\Rh(A,a)=(L,H)$ be a hermitian lattice of rank $g$. If $(A,a)$ has field of moduli $\QQ$ then $\Cl(R)$ has exponent dividing $g$ and $\st(L)$ has order at most $2.$
\end{prop}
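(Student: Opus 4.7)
The strategy is to translate the condition that $(A,a)$ has field of moduli $\QQ$ through the functor $\Rh$ into isometry constraints on $(L,H)$, and then extract $R$-module invariants (rank and Steinitz class) from these constraints. If $(A^\sigma,a^\sigma)\simeq (A,a)$ for every $\sigma\in\GalQQ$, then by Theorems~\ref{KQ} and \ref{QK} the hermitian lattice $(L,H)$ must be isometric to the various lattices those theorems produce; in particular the underlying $R$-module of $L$ must be isomorphic to each of them, since any isometry of hermitian $R$-lattices is $R$-linear.

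For the exponent claim, I apply the condition to $\sigma\in\GalQK$. By Theorem~\ref{QK}, $\Rh(A^\sigma,a^\sigma)\simeq(\fa L,\frac{1}{N(\fa)}H)$ where $F(\sigma)=\dot{\fa}^{-1}$. Since $F$ is surjective onto $\Cl(R)$, the ideal class $\dot{\fa}$ ranges over all of $\Cl(R)$ as $\sigma$ varies. If $L=\bigoplus_{i=1}^g\fa_i x_i$ is a pseudo-basis, then $\fa L=\bigoplus_i(\fa\fa_i)x_i$, whence $\st(\fa L)=\dot{\fa}^g\,\st(L)$. By the Steinitz classification recalled in Section~\ref{sect2}, two $R$-lattices of equal rank are $R$-isomorphic if and only if they have the same Steinitz class. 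So $\fa L\simeq L$ forces $\dot{\fa}^g=1$ in $\Cl(R)$ for every class $\dot{\fa}$, meaning $\Cl(R)$ has exponent dividing $g$.

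For the Steinitz class claim, I apply the condition to the complex conjugation. By Theorem~\ref{KQ}, $\Rh(\overline{A},\overline{a})\simeq(\overline{L},\overline{H})$, so in particular $\overline{L}\simeq L$ as $R$-modules. With the pseudo-basis above, $\overline{L}$ is $R$-isomorphic to $\bigoplus_i\overline{\fa_i}\,\overline{x_i}$, and so $\st(\overline{L})=\overline{\st(L)}$. Because $K$ is imaginary quadratic, $\fa\cdot\overline{\fa}=N(\fa)R$ is principal, giving $\overline{\dot{\fa}}=\dot{\fa}^{-1}$ in $\Cl(R)$. Hence $\overline{\st(L)}=\st(L)^{-1}$, and the isomorphism $\overline{L}\simeq L$ yields $\st(L)^2=1$.

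The only step that requires a little care is checking that the isometry class of $(\fa L,\frac{1}{N(\fa)}H)$ in Theorem~\ref{QK} depends only on the class $\dot{\fa}$ and not on the representative $\fa$; this is immediate since replacing $\fa$ by $\lambda\fa$ rescales the lattice by $\lambda$ and the form by $|\lambda|^{-2}$, producing an isometric hermitian lattice. Beyond that, the real content is just reading off the right $R$-module invariants from the two theorems of Section~\ref{sect3} and using that in the class group of an imaginary quadratic field complex conjugation acts as inversion.
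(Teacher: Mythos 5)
Your proposal is correct and follows essentially the same route as the paper: it uses Theorem~\ref{QK} with the surjectivity of $F$ to get $(L,H)\simeq\left(\fa L,\frac{1}{N(\fa)}H\right)$ for every class, reads off $\st(\fa L)=\dot{\fa}^g\st(L)$ to force exponent dividing $g$, and uses Theorem~\ref{KQ} together with $\fa\overline{\fa}=N(\fa)R$ to get $\st(L)=\overline{\st(L)}=\st(L)^{-1}$. The extra remarks you make (independence of the representative $\fa$, and invoking the Steinitz classification explicitly) are fine elaborations of steps the paper leaves implicit.
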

\begin{proof}
If $(L,H)$ corresponds to a polarized abelian variety with field of moduli $\QQ$ then $(L,H)\simeq\left(\fa L,\frac{1}{N(\fa)}H\right)$ for all fractional ideal $\fa.$ In particular, $L\simeq \fa L$ for all $\fa$ and then, their Steinitz classes are the same
$$\st(L)=\st(\fa L)=\fa^g\st(L)\in\Cl(R).$$
Hence $\dot{\fa}^g=1\in\Cl(R)$ so $\Cl(R)$ has exponent dividing $g.$

The hermitian lattice isometry class must also be invariant by the action of the complex conjugation so $$\st(L)=\st(\overline{L})=\overline{\st(L)}.$$
By the formula $\overline{\fa}\fa=N(\fa)R$ it means that $\st(L)$ must have order at most $2$.
\end{proof}
\begin{coro}\label{corodd}
Let $(A,a)\in \PAV$ with odd dimension $g$. Suppose $(A,a)$ has field of moduli $\QQ$. Then there exists an elliptic curve $E$ with CM by $R$ such that $A\simeq E^g.$
\end{coro}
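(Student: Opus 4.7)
My plan is to combine the two necessary conditions from Proposition~\ref{exponent} with a parity argument, then invoke the equivalence of categories from Theorem~\ref{theormod} to translate the conclusion back to the abelian variety side.

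First I would apply Proposition~\ref{exponent} to $(A,a)$ with $\Rh(A,a) = (L,H)$ to obtain two things simultaneously: the exponent of $\Cl(R)$ divides $g$, and $\st(L) \in \Cl(R)$ has order at most $2$. The order of the element $\st(L)$ must divide the exponent of $\Cl(R)$, hence it divides $g$. Combined with order dividing $2$, and the hypothesis that $g$ is odd, we get that the order of $\st(L)$ divides $\gcd(g,2) = 1$, so $\st(L)$ is trivial in $\Cl(R)$.

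Next, since $R$ is the maximal order of a quadratic imaginary field (hence Dedekind), the $R$-lattice $L$ is determined up to isomorphism by its rank and Steinitz class (the result cited as \cite[Theorem~13]{KCon2} in the excerpt). Thus $L \simeq R^g$ as $R$-modules, i.e., $L$ is free of rank $g$.

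Finally, I would apply the equivalence of categories $\R \colon \AVR \to \RLat$ from Theorem~\ref{theormod}. Since this functor commutes with products (this is noted explicitly for $\T$ and $\PT$ in Section~\ref{sect1}, and the $R$-module structure respects the splitting of $\Gamma$), we have
$$A \simeq \R^{-1}(R^g) \simeq \R^{-1}(R)^g.$$
The object $\R^{-1}(R) = (\CC \otimes_R R)/R = \CC/R$ is an elliptic curve whose lattice equals $R$, so it has $\End(\CC/R) \simeq R$, i.e., it has CM by $R$. Setting $E = \CC/R$ gives $A \simeq E^g$, as desired. There is no real obstacle in this argument; the only slightly delicate point is the parity argument that forces $\st(L) = 1$, which is a two-line consequence of Proposition~\ref{exponent} and the hypothesis that $g$ is odd.
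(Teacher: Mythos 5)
Your proposal is correct and follows essentially the same route as the paper: Proposition~\ref{exponent} plus the parity of $g$ forces $\st(L)$ to have order dividing both $g$ and $2$, hence to be trivial, so $L\simeq R^g$ is free and the equivalence of categories gives $A\simeq E^g$ with $\R(E)\simeq R$. The only difference is that you spell out the intermediate steps (order of $\st(L)$ divides the exponent of $\Cl(R)$; the functor commutes with products) which the paper leaves implicit.
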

\begin{proof}
Let $(L,H)=\Rh(A,a)$ be the corresponding unimodular hermitian lattice. By Proposition \ref{exponent}, its Steinitz class $\st(L)$ has order dividing $g$ and $2$ hence it must be $1.$ In other words $L$ is free over $R$, i.e., $L\simeq R^g$ and then $A\simeq E^g$ with $\R(E)\simeq R.$
\end{proof}

\subsection{Enumeration of the indecomposable principally polarized abelian varieties in $\PAV$ with field of moduli $\QQ$}\label{sectalgo}
Since we are able to understand the action of $\GalQK$ and $\Gal(K/\QQ)$ through the equivalence of categories $\Rh$ developed in Section \ref{sect2} we are able to check when $(A,a)\simeq (A^\sigma,a^\sigma)$ for all $\sigma\in\GalQQ$ when $(A,a)\in\PAV$ by looking for isometries between hermitian $R$-lattices. This is what Algorithm \ref{algor} does.\\

\emph{We denote by $\AR(g)$ the set of all classes of dimension $g$ indecomposable principally polarized abelian varieties $(A,a)\in\PAV$ and by $\ARQ(g)$ the subset of $\AR(g)$ corresponding to elements of $\AR(g)$ with field of moduli $\QQ.$}

\begin{algorithm}[ht]

\begin{algorithmic} \caption{Enumeration algorithm}\label{algor}
	\REQUIRE An integer $g$ and a maximal order $R$ with exponent dividing $g$.
   \ENSURE The list of unimodular indecomposable hermitian lattices $(L,H)$ corresponding to the elements of $\ARQ(g)$.
\STATE $\LList\leftarrow\{ \text{Unimodular indecomposable hermitian lattices of rank }g\}/\simeq$
\STATE $\LListFM\leftarrow \{~\}$ \COMMENT{ List of abelian varieties with field of moduli $\QQ$.}
\FOR{$(L,H)\in\LList$}
   \STATE $\bool\leftarrow \TRUE$
\FOR{$\fa\in\{$generators of $\Cl(R)\}$}
   \STATE $\bool\leftarrow \bool$ and $(L,H)\simeq\left(\fa L,\frac{1}{N(\fa)}H\right)$.
\ENDFOR
   \IF{ $\bool$ and $(L,H){\simeq}              \left(\overline{L},\overline{H}\right)$} \STATE $\LListFM\leftarrow \LListFM\cup \{(L,H)\}$
   \ENDIF
\ENDFOR
\RETURN $\LListFM$

\end{algorithmic}

\end{algorithm}
To compute the list of elements of $\ARQ(g)$ for a given maximal order $R$ we need the list of all unimodular indecomposable hermitian $R$-lattices of rank $g$. This can be done using the classification of these lattices developed by authors in \cite{KNRR}.

We want to run the algorithm over all maximal orders of a given exponent dividing $g$. By \cite{EKN}, the complete list of the corresponding discriminants is finite for all $g$ and known for $g$ up to $8$ under the Extended Riemann Hypothesis. However, we can get rid of the Extended Riemann Hypothesis for the genus $2$ thanks to Proposition \ref{prop:g2} and for the genus $3$ thanks to Proposition \ref{prop:g3}, presented in the Appendix. Indeed, these two propositions provide an upper bound for the class number of a maximal order $R$ that an abelian variety isomorphic to a product of elliptic curves with complex multiplication by $R$ can have. This allows us to use the classification of imaginary quadratic fields with a given class number $h$, which is complete for $h\leq 100$ by \cite{Wat}.

\subsection{Enumeration of dimension $2$ and $3$ principally polarized abelian varieties of $\PAV$ with field of moduli $\QQ$}\label{g23}

As the computations become quickly time-consuming as the dimension $g$ and the discriminant of the order grow we restricted to $g=2$ and $3$ to be able to have complete tables. We used the Magma library developed in \cite{KNRR}. 

We use Algorithm 1 to compute the cardinality of $\ARQ(2)$ and present the results in the Table \ref{g2}. In the column $\mathcal{P}$ we copy the number of $(A,a)\in \PAV$ such that $A$ is the square of an elliptic curve with field of moduli $\QQ$ to confirm we find the same values as in \cite[Table~2]{GHR}.

\begin{table}[ht]
\centering
\begin{tabular}{
|c|@{\hbox to 0.1em{}}r@{\hbox to 0.7em{}}c@{\hbox to 0.1em{}}c@{\hbox to 0.1em{}}c||
c@{\hbox to 0.5em{}}|@{\hbox to 0.5em{}}r@{\hbox to 0.7em{}}c@{\hbox to 0.1em{}}c@{\hbox to 0.1em{}}c||
c|@{\hbox to 0.7em{}}r@{\hbox to 0.7em{}}c@{\hbox to 0.1em{}}c@{\hbox to 0.1em{}}c|}
\hline
$h_R$ & $\Delta$ & $\mathcal{P}$ & $\#\ARQ$ & $\#\AR$ &$h_R$ & $\Delta$ & $\mathcal{P}$ & $\#\ARQ$ &  $\#\AR$ & $h_R$ & $\Delta$ & $\mathcal{P}$ & $\#\ARQ$ & $\#\AR$\\ 
    \hline
1&-3 & 0 & 0 & 0&2&-15& 0& 1& 1&2&-115& 0 &3 &11\\
&-4 &0&0&0&&-20& 1& 1& 3&&-123& 0& 4& 12\\
&-7&0&0&0&&-24&1& 3& 3&&-148& 3 &5 &13\\
&-8 & 1 &1 &1&&-35& 0& 1 &5&&-187 &0 &3& 17\\
&-11 & 1& 1& 1&&-40 &2 &4 &4&&-232& 5& 10& 20\\
&-19 & 1 & 1& 1&&-51 &0 &2 &6&&-235& 0& 5& 21\\
&-43 &2 &2 &2&&-52& 2 &3& 5&&-267& 0 &6 &24\\
&-67 &3 &3 &3&&-88 &2 &6& 8&&-403& 0& 3& 35\\
&-163 &7 &7 &7&&-91 &0& 1 &9&&-427 &0 &3 &37\\
\hline \hline

$h_R$ & $\Delta$ & $\mathcal{P}$ & $\#\ARQ$ & $\#\AR$ &$h_R$ & $\Delta$ & $\mathcal{P}$ & $\#\ARQ$ &  $\#\AR$ & $h_R$ & $\Delta$ & $\mathcal{P}$ & $\#\ARQ$ & $\#\AR$\\ 
    \hline
4&-84 &0 &2 &18&4 &-340 &0&2& 60&4&-595 &2& 2 &106\\
&-120& 3& 4& 24&&-372&0 &2 &66&&-627& 0& 0& 112\\
&-132 &1 &2 &26&&-408 &0 &4 &72&&-708 &1 &2 &122\\
&-168& 0 &4& 32&& -435& 0&2& 80&&-715&0 &2 &126\\
&-195 &0 &2 &40&&-483 &0 &0 &88&&-760 &1 &4 &130\\
&-228&1&2& 42&&-520& 3&4& 90&&-795&2 &2 &140\\
&-280 &0&4 &50&&-532 &0 &2 &92&&-1012 &0 &2 &172\\
&-312 &1&4& 56&&-555&0&2& 100&&-1435&0 &2 &246\\
\hline
\end{tabular}
    \begin{tablenotes}
        \item $h_R\colon$ Class number of the maximal order $R$ of $\QQ(\sqrt{\Delta})$.
        \item$\#\AR\colon$ Number of elements of $\AR(2)$ defined in Section \ref{sectalgo}.
        \item$\#\ARQ\colon$ Number of elements of $\ARQ(2)$.
        \item$\mathcal{P} \colon$ Number of elements $(A,a)$ of $\ARQ(2)$ such that $A\simeq E^2$ for some $E$.
        \item
    \end{tablenotes}
\caption{Computations for $g=2$}\label{g2}
\end{table}

In fact, for $g=2$, the following proposition shows that it is not necessary to check if $(L,H)\simeq(\overline{L},\overline{H})$.
\begin{prop}\label{conjisgal}
Let $(A,a)$ be a dimension $2$ principally polarized abelian variety over $\CC$ with $A$ isomorphic to the power of elliptic curves $E_i$ with CM by $R$ maximal. Let $\Rh(A,a)=(L,H)$ be a hermitian integral lattice and let $\fa$ be the Steinitz class of $L$. Then $$\Rh\left(\overline{A},\overline{a}\right)=\left(\overline{L},\overline{H}\right)\simeq \left(\overline{\fa}L,\frac{1}{N(\fa)}H\right).$$
Hence in this particular case, the action of the complex conjugation corresponds to the action of an automorphism of $\Gal(\overline{\QQ}/K)$.
\end{prop}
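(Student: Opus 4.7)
The statement has two parts: the equality $\Rh(\overline{A},\overline{a})=(\overline{L},\overline{H})$, which is immediate from Theorem~\ref{KQ}, and the isometry $(\overline{L},\overline{H})\simeq(\overline{\fa}L,\tfrac{1}{N(\fa)}H)$, which is the real content of the proposition. I plan to attack the isometry by pinning down the invariants of the two hermitian lattices and then producing an explicit identification.

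First I would verify that both sides are positive-definite unimodular rank-$2$ hermitian $R$-lattices with the same Steinitz class. Unimodularity of $(\overline{\fa}L,\tfrac{1}{N(\fa)}H)$ follows from $\fa\overline{\fa}=(N(\fa))$: the scale becomes $\tfrac{1}{N(\fa)}\,\overline{\fa}\fa\,H(L,L)=H(L,L)=R$, and an analogous computation handles the volume. For the Steinitz classes, $\st(\overline{L})=\overline{\st(L)}=\overline{\fa}$ by definition, while $\st(\overline{\fa}L)=[\overline{\fa}]^{2}[\fa]=[\overline{\fa}]$ in $\Cl(R)$ since $[\fa\overline{\fa}]$ is trivial.

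Having reduced to the isometry of two positive-definite unimodular rank-$2$ hermitian $R$-lattices with the same Steinitz class, I would fix a pseudo-basis $L=Rx_{1}\oplus\fa x_{2}$ whose Gram matrix $\bigl(\begin{smallmatrix}a & c\\ \overline{c} & b\end{smallmatrix}\bigr)$ satisfies $ab-|c|^{2}=1/N(\fa)$. Choosing compatible pseudo-bases of the common shape $(R,\cdot),(\overline{\fa},\cdot)$ on the two sides yields Gram matrices $G_{1}=\bigl(\begin{smallmatrix}a & \overline{c}\\ c & b\end{smallmatrix}\bigr)$ for $\overline{L}$ and, using the basis $(N(\fa)x_{2},x_{1})$ of $\overline{\fa}L$, $G_{2}=\bigl(\begin{smallmatrix}N(\fa)b & \overline{c}\\ c & a/N(\fa)\end{smallmatrix}\bigr)$. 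Since the off-diagonal entries already coincide, the remaining task is a change-of-basis matrix $M$ with entries in the appropriate fractional ideals such that $M^{*}G_{2}M=G_{1}$.

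The main obstacle is that the naive diagonal and swap candidates for $M$ succeed only when $\fa$ is principal: a diagonal $M$ would need a unit of $R$ of norm $a/(N(\fa)b)$, forcing $a=N(\fa)b$, while a swap requires an element of $\overline{\fa}$ of norm $N(\fa)$, i.e.\ a generator of $\overline{\fa}$. In the non-principal case one must allow a genuinely mixed $M$, whose existence is not transparent from the matrix identities alone. The cleanest conclusion is then to invoke the classification of rank-$2$ positive-definite unimodular hermitian $R$-lattices over a maximal order in an imaginary quadratic field: such lattices have one class per genus (at each prime $\fp$ of $R$, unimodular local hermitian lattices of fixed rank are classified by rank alone) and one genus per Steinitz class, a one-class fact that can be certified either via the hermitian mass formula or using the Magma library of \cite{KNRR} on which Section~\ref{g23} already depends.
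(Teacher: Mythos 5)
Your reduction and invariant check are fine as far as they go (both sides are unimodular of rank $2$, and $\st(\overline{L})=[\overline{\fa}]=\st(\overline{\fa}L)$), and you correctly isolate the crux: producing an isometry when $\fa$ is not principal. But the step you use to close the argument is false. Positive-definite unimodular hermitian $R$-lattices of rank $2$ are \emph{not} classified by their Steinitz class: it is not true that there is one class per genus, and the paper's own data refutes the conclusion you need. In Table~\ref{g2}, for $\Delta=-163$ the class number is $1$ (so every lattice has trivial Steinitz class), yet there are $7$ pairwise non-isometric indecomposable unimodular rank-$2$ lattices (and more counting decomposable ones); for $\Delta=-1435$ there are $246$. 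Class numbers of definite hermitian genera grow with the discriminant, so rank, unimodularity and Steinitz class determine the underlying $R$-module $L$ but not the hermitian lattice $(L,H)$ up to isometry. (The local claim is also inaccurate at ramified primes, but the global failure is what kills the argument.) Hence the non-principal case, which you flagged as opaque, is genuinely left unproven.

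The paper closes exactly this gap by an explicit construction that needs no principality. Write $L=Rx\oplus\fa y$ with Gram matrix $G=\begin{pmatrix}\alpha & \beta\\ \overline{\beta} & \delta\end{pmatrix}$; unimodularity forces $N(\fa)\det(G)=1$. Then the matrix $P=N(\fa)\begin{pmatrix}\overline{\beta} & \delta\\ -\alpha & -\beta\end{pmatrix}$, viewed as a map $K\overline{x}+K\overline{y}\rightarrow Kx+Ky$, satisfies $\trsp{P}\,\tfrac{1}{N(\fa)}G\,\overline{P}=N(\fa)\det(G)\,\overline{G}=\overline{G}$, so it is an isometry of hermitian spaces pulling $\tfrac{1}{N(\fa)}H$ back to $\overline{H}$; and the integrality bounds $\alpha\in\ZZ$, $\beta\in\overline{\fa}^{-1}=\fa/N(\fa)$, $\delta\in\tfrac{1}{N(\fa)}\ZZ$ show that $P$ maps $\overline{L}=R\overline{x}\oplus\overline{\fa}\,\overline{y}$ into $\overline{\fa}L=\overline{\fa}x\oplus N(\fa)y$, giving the desired isometry $(\overline{L},\overline{H})\simeq\bigl(\overline{\fa}L,\tfrac{1}{N(\fa)}H\bigr)$. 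This adjugate-type trick, special to rank $2$ and exploiting $N(\fa)\det(G)=1$, is the missing idea that your diagonal/swap candidates could not supply.
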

\begin{proof}
Let us write $L=R x\oplus \fa y.$ Let $G= \begin{pmatrix} \alpha & \beta \\ \overline{\beta} & \delta \end{pmatrix}$ be the Gram matrix of $H$ in the basis $(x,y)$ of $KL$. Since $(A,a)$ is principally polarized $(L,H)$ must be unimodular and then its volume $\fv(L)=N(\fa)\det(G)R=R$ so $N(\fa)\det(G)$ is invertible and real in $R$ so $N(\fa)\det(G)=1.$ 

Now consider $P= N(\fa)\begin{pmatrix} \overline{\beta} & \delta \\ -\alpha & -\beta \end{pmatrix}$, matrix of a linear map $K\overline{x}+K\overline{y}\rightarrow K x+K y.$ It satisfies the relation $$\trsp P \frac{1}{N(\fa)}G \overline{P}= N(\fa)\det(G)\overline{G}=\overline{G}.$$ So $P$ defines an isometry between hermitian spaces.

Moreover, $\alpha=H(x,x)=N(x)\in R\cap \RR=\ZZ, \beta=H(x,y)$, so $\overline{\fa}\beta=H(x,\fa y)\subset R$ so $\beta\in \overline{\fa}^{-1}=\frac{\fa}{N(\fa)}$ and, in the same way, $\delta\in \frac{1}{N(\fa)}\ZZ.$
Hence 
\begin{align*}
    P\overline{x}&=N(\fa)(\overline{\beta}x-ay)\in\overline{\fa}x+N(\fa)y=\overline{\fa}L\\
    P\overline{\fa y}&=N(\fa)\overline{\fa}(\delta x-by)\in \overline{\fa}x+N(\fa)y=\overline{\fa}L.
\end{align*}
Thus, $P$ defines an isometry $(\overline{L},\overline{H})\rightarrow \left(\overline{\fa}L,\frac{1}{N(\fa)}H\right)$
\end{proof}

In dimension $g=3$ computations are more time consuming. Fortunately, by Corollary \ref{corodd}, in odd dimension all isomorphism class of $(A,a)$ in $\ARQ(3)$ are actually isomorphic to some a power of an elliptic curve. Hence we can run the algorithm only on free unimodular hermitian lattices and the latter are easier to enumerate and to work with.

We summarize the calculations in Table \ref{g3}. All computations use Algorithm \ref{algor}, except for the discriminant $-4027$, for which Proposition \ref{prop:g3} allows us to complete this missing entry.
\begin{table}[ht]
\centering
\begin{tabular}{|l|@{\hbox to 0.1em{}}r@{\hbox to 0.1em{}}c@{\hbox to 0.1em{}}r||l|@{\hbox to 0.1em{}}r@{\hbox to 0.1em{}}c@{\hbox to 0.1em{}}r|
}
\hline
$h_R$ & $\Delta$ & $\#\ARQ$ & $\#\AR^\text{free}$ & $h_R$ & $\Delta$ & $\#\ARQ$ & $\#\AR^\text{free}$ \\ 
    \hline
1&-3  & 0 & 0&3&-107  &2 &44\\
&-4 &0&0&&-139  &1 &79\\
&-7&0&0&&-211 &0& 209\\
&-8  &0 &0&&-283  & 1& 417\\
&-11&0& 0&&-307 &0 &507\\
&-19  &1& 1&&-331& 2 &613\\
&-43  &3 &5&&-379&0 &851\\
&-67  &5 &13&&-499& 1 &1665\\
&-163  &13 &103&&-547 & 1 &2059\\
 &     &  &  &&-643& 1 &3075\\
3&-23&0& 3&&-883& 0&6703\\
&-31&0& 6&&-907& 1 &7163\\
&-59&1& 10&& & &\\
&-83&0 &24 &9& -4027 & 0 & 0 \\
\hline
\end{tabular}
    \begin{tablenotes}
        \item $h_R\colon$ Class number of the maximal order $R$ of $\QQ(\sqrt{\Delta})$. 
        \item $\#\AR^\text{free}\colon$ Number of classes $(A,a)$ in $\AR(3)$ with $A\simeq E^3$ for some $E$.
        \item $\#\ARQ\colon$ Number of elements of $\ARQ(3)$.
        \item
    \end{tablenotes}
    
\caption{Computations for $g=3$}\label{g3}
\end{table}

\appendix
\section{Class groups of CM fields from polarized products of CM elliptic curves with field of moduli $\QQ$}\label{appA}
\begin{center}
    \textit{by Francesc Fité and Xavier Guitart}
\end{center}
Let $K$ be an imaginary quadratic field and let ${H_K}$ be its Hilbert class field. We denote by $\mathrm{C}_g$ the cyclic group of $g$ elements.
\begin{prop}\label{prop:g2}
Let $E_1/\Qbar$ and $E_2/\Qbar$ be elliptic curves with CM by $K$, and let $\varphi$ be an indecomposable principal polarization on $E_1\times E_2$. If the field of moduli of $(E_1\times E_2,\varphi)$ is $\QQ$, then $\Cl(K)$ is isomorphic to one of the groups $ \mathrm{C}_1,\mathrm{C}_2$, or $\mathrm{C}_2\times \mathrm{C}_2$.
\end{prop}
\begin{proof}
  Since $\varphi$ is indecomposable there exists a curve $C/\Qbar$ with field of moduli $\QQ$ such that $\Jac(C)\simeq E_1\times E_2$. The curve $C$ is not necessarily defined over $\QQ$. By \cite[\S2.4]{Mes91} there exists a number field $k'$ with $[k'\colon \QQ]\leq 2$ such that $C$ admits a model over $k'$. Since  $k'$ can be chosen to be any field over which Mestre's conic has a rational point, we can choose it so that $k'\cap {H_K}=\QQ$.

  Put $k = Kk'$, and suppose from now on that $C$ is defined over $k$.  In particular, $A=\Jac(C)$ is an abelian surface defined over $k$ such that $A_{\Qbar} \sim E^2$, where $E/\Qbar$ is an elliptic curve with CM by $K$. Denote by $L$ the smallest field of definition of the endomorphisms of $A$.  Since $k$ contains $K$, by \cite[Theorem 2.14]{FG} the field $F=Hk$ is a subfield of $L$ and $\Gal(F/k)$ has exponent $\leq 2$. By \cite[Remark 3.1]{FG} $\Gal(L/k)$ is either $\mathrm{C}_n$ for $n\in \{1,2,3,4,6\}$, $\mathrm{D}_n$ for $n\in \{2,3,4,6\}$, $\mathrm{A}_4$ or $\mathrm{S}_4$. Since $\Gal(F/k)$ is a quotient of exponent $\leq 2$ of $\Gal(L/k)$, we see that $\Gal(F/k)\simeq \mathrm{C}_1,\mathrm{C}_2$, or $\mathrm{C}_2\times \mathrm{C}_2$ (cf. \cite[Table1]{FG}). 
   By our choice of $k'$, we have that ${H_K}\cap k=K$, and hence $\Gal(F/k)\simeq \Gal({H_K}/K)\simeq \mathrm{Cl}(K)$.  
  
\end{proof}
\begin{rem} The above proposition dispenses with the assumption of the Generalized Riemann Hypothesis in \cite[Table 4]{GHR}. Moreover the complete list of quadratic imaginary field with a given class number $h$ is known up to $h=100$ by \cite{Wat}. Hence the table \ref{g2} is unconditionally complete.
  
A key property used in the proof of Proposition \ref{prop:g2} is that a curve of genus $2$ can be defined over a quadratic extension of its field of moduli. The same is true for curves of genus $3$.
\end{rem}

\begin{prop}\label{prop: field of def}
Let $C$ be a curve of genus $3$ with field of moduli $\QQ$. There exist infinitely many quadratic extensions $k/\QQ$ such that $C$ admits a model over $k$.  
\end{prop}
\begin{proof}
  The curve $C$ is either a hyperelliptic curve or a smooth plane quartic. If $C$ is hyperelliptic, it is well known that it can be defined over infinitely many fields $k$ with $[k\colon \QQ]\leq 2$ (the curve $C/\Aut(C)$ is a conic that can be defined over $\QQ$ and $k$ can be taken to be any field where it has rational points). Suppose that $C$ is a plane quartic. If $|\Aut(C)|=1$, the curve admits a model over its field of moduli by Weil descent. If $|\Aut(C)|>2$, then the curve also admits a model over its field of moduli (cf. \cite[Prop. 2.3 and \S3.2]{LRRS}). If $|\Aut(C) |= 2$ the result is Lemma \ref{lemma: field of def aut Z2} below, which is probably well known but we include a proof for completeness.
\end{proof}

\begin{lem}\label{lemma: field of def aut Z2}
  Let $C/\Qbar$ be a curve with field of moduli $\QQ$ and such that $|\Aut(C)|=2$. There exist infinitely many quadratic extensions $k/\QQ$ such that $C$ admits a model over $k$.
\end{lem}
\begin{proof}
  For every $\sigma \in G_\QQ:=\Gal(\Qbar/\QQ)$, let $\mu_\sigma \colon {}^\sigma C \stackrel{\simeq}{\lra} C$ be an isomorphism, chosen in such a way that the system $\{\mu_\sigma\}_{\sigma\in G_\QQ }$ is locally constant. For any $\psi \in \Aut(C)$ we have that
  \begin{align}
    \label{eq:compatible}
    {}^\sigma\psi = \mu_\sigma^{-1} \circ \psi \circ \mu_\sigma.
  \end{align}
  The equality is obvious when $\psi$ is the identity. When $\psi$ is the non-trivial automorphism of $C$, the equality follows from the fact that then ${}^\sigma \psi$ is the non-trivial automorphism of ${}^\sigma C$.

For $\sigma,\tau \in G_\QQ $ define
  \begin{align*}
    c(\sigma,\tau) = \mu_\sigma\circ {}^\sigma\mu_\tau\circ \mu_{\sigma\tau}^{-1}\in \Aut(C).
  \end{align*}
  A short computation using \eqref{eq:compatible} shows that $c$ is a two-cocycle in $Z^2(G_\QQ,\Aut(C))$, where the action of $G_\QQ$ on $\Aut(C)$ is the trivial action. Therefore, the class of $c$ can be identified with an element in $H^2(G_\QQ,\{\pm 1\})$. Now $H^2(G_\QQ,\{\pm 1\})$ is isomorphic to the $2$-torsion of the Brauer group of $\QQ$, which is generated by quaternion algebras. Any product of quaternion algebras is equivalent in the Brauer group to a quaternion algebra, so the class of $c$ can be identified with a quaternion algebra $B$. Let $k$ be any quadratic splitting field of $B$ (there exist infinitely many such fields $k$). The restriction $\Res_\QQ^k(c)$ is a coboundary, that is to say, there exists a function $\delta\colon G_k\ra \{\pm 1\}$ such that
\begin{align*}
  c(\sigma,\tau)=\delta(\sigma)\delta(\tau)\delta(\sigma\tau)^{-1} \text{ for all }\sigma,\tau\in G_k.
\end{align*}
If we define $\tilde\mu_\sigma:=\delta(\sigma)^{-1}\circ\mu_\sigma$ for $\sigma\in G_k$, the system $\{\tilde\mu_\sigma\}_{\sigma\in G_k}$ is a descent data for $C$ over $k$ and we see that $C$ can be defined over $k$.
\end{proof}

\begin{prop}\label{prop:g3}
  Let $E_1,E_2,E_3$ be elliptic curves over $\Qbar$ with CM by $K$. Let $\varphi$ be a principal indecomposable polaritation on $A:=E_1\times E_2\times E_3$ such that $(A,\varphi)$ has field of moduli $\QQ$. Then the class number of $K$ is 1 or 3.
\end{prop}
\begin{proof}
  By the results of  \cite{OU} we have that $(A,\varphi)$ is isomorphic to the canonically polarized Jacobian of some genus three curve $C/\Qbar$. Since $(A,\varphi)$ has field of moduli $\QQ$, the curve $C$ has field of moduli $\QQ$ as well. By Proposition \ref{prop: field of def} there exists a quadratic extension $k'/\QQ$ with $k'\cap {H_K} =\QQ$ such that $C$ admits a model over $k'$. Put $k=k'K$ and suppose from now on that $C$ is defined over $k$.

  If $A= \Jac(C)$ we have that $A$ is defined over $k$ and  $A_{\Qbar} \sim E^3$, where $E$ is an elliptic curve with CM by $K$.  Denote by $L$ the minimal extension of $k$ such that $\End(A_{\Qbar})=\End(A_L)$. By \cite[Theorem 2.14]{FG} the field $F=Hk$ is a subfield of $L$ and $\Gal(F/k)$ has exponent dividing $3$. Since $k\cap {H_K} = K$ we have that $\mathrm{Cl}(K)\simeq \Gal({H_K}/K)\simeq \Gal(F/k)$ has exponent 3.

  The argument in the proof of \cite[Corollary 2.17]{FG} shows that if $v_3(| \Gal(L/k)|) >  1$ then $K = \QQ(\sqrt{-3})$  (here $v_3$ stands for the valuation at $3$). Suppose then that $v_3(| \Gal(L/k)|)\leq 1$. Since $Hk/k$ is a subextension of $L/k$ we see that  $v_3(|\Gal({H_K}/K)|)=v_3(|\Gal(Hk/k)|)\leq 1$. This completes the proof of the proposition.

\end{proof}

\begin{rem}
Let $A$ be an abelian threefold defined over a number field $k$ such that $A_{\Qbar}\sim E^3$, where $E/\Qbar$ is an elliptic curve with CM by $K$. Denote by $L$ the minimal extension of $k$ such that $\End(A_{\Qbar})=\End(A_L)$. As explained in \cite[\S3.2]{FKS21} the group of components $\pi_0(\ST(A))$ of the Sato-Tate group of $A$ can be identified with $\Gal(L/k)$. The last paragraph of the proof of the previous proposition shows that if $v_3(|\pi_0(\ST(A))|)>1$, then $K=\QQ(\sqrt{-3})$. 

There are 4 maximal genus 3 Sato-Tate groups $G$ with connected component of the identity $G^0\simeq \mathrm{U}(1)_3$ and $v_3(|\pi_0(G)|)>1$. These are $J_s(B(T,3))$, $J(B(T,3))$, $J(D(6,6))$, and $J(E(216))$. Abelian threefolds realizing them are given in \cite[\S8]{FKS21}. Consistently with the observation of the previous paragraph, note that in all of these constructions $K$ is taken to be $\QQ(\sqrt{-3})$.
\end{rem}

\bibliographystyle{alpha}
\newcommand{\etalchar}[1]{$^{#1}$}

\end{document}